\documentclass[12pt]{article}
\usepackage{amsmath,amsthm,amssymb}

\allowdisplaybreaks[3]

\numberwithin{equation}{section}

\newtheorem{theorem}{Theorem}[section]

\newtheorem{lemma}[theorem]{Lemma}
\newtheorem{proposition}[theorem]{Proposition}
\theoremstyle{remark}
\newtheorem{remark}[theorem]{Remark}

\newcommand\R{{\mathbb R}}
\newcommand\X{{\R^d}}
\newcommand\N{{\mathbb N}}

\newcommand\B{{\mathcal B}}

\renewcommand\L{{\mathcal L}}
\newcommand\K{{\mathcal K}}
\newcommand\La{\Lambda}
\newcommand\la{\lambda}
\newcommand\Ga{\Gamma}
\newcommand\ga{\gamma}
\newcommand\eps{\varepsilon}

\newcommand\n{{|\eta|}}
\newcommand\ren{{\eps, \, \mathrm{ren}}}
\newcommand\lv{\left\vert}
\newcommand\rv{\right\vert}

\newcommand\lu{\left\langle}
\newcommand\ru{\right\rangle}
\newcommand\llu{\lu\!\lu}
\newcommand\rru{\ru\!\ru}

\newcommand\hT{{\hat{T}}}

\newcommand\hL{{\hat{L}}}

\newcommand\goto{\rightarrow}
\newcommand\hLrenadj{\hL_\ren^\ast}

\newcommand\e{{(\eps)}}

\newcommand\func[1]{{\mathrm{#1}}}

\DeclareMathOperator*\esssup{{\mathrm{ess\,sup}}}

\begin{document}

\title{Operator approach to Vlasov scaling for~some~models~of~spatial
ecology}

\author{Dmitri Finkelshtein%
\thanks{Institute of Mathematics, National Academy of Sciences of Ukraine, Kyiv,
Ukraine (\texttt{fdl@imath.kiev.ua}).}
\and Yuri Kondratiev%
\thanks{Fakult\"{a}t f\"{u}r Mathematik, Universit\"{a}t Bielefeld, 33615
Bielefeld, Germany (\texttt{kondrat@math.uni-bielefeld.de})} \and Oleksandr Kutoviy%
\thanks{%
Fakult\"{a}t f\"{u}r Mathematik, Universit\"{a}t Bielefeld, 33615
Bielefeld, Germany (\texttt{kutoviy@math.uni-bielefeld.de}).}}

\date{}

\maketitle

\pagestyle{myheadings} \thispagestyle{plain} \markboth{D.
Finkelshtein, Yu. Kondratiev and O. Kutoviy}{Operator approach to
Vlasov scaling for~models~of~spatial ecology}

{\small {\bf Key words.}  Continuous systems, Spatial
birth-and-death processes, Individual based models, Vlasov scaling,
Vlasov equation, Correlation functions}

{\small {\bf MSC (2010):} 47D06, 60J25, 60J35, 60J80, 60K35}

\begin{abstract}
We consider Vlasov-type scaling for Markov evolution of
birth-and-death type in continuum, which is based on a proper
scaling of corresponding Markov generators and has an algorithmic
realization in terms of related hierarchical chains of correlation
functions equations. The existence of rescaled and limiting
evolutions of correlation functions as well as convergence to the
limiting evolution are shown. The obtained results enable to derive
a non-linear Vlasov-type equation for the density of the limiting
system.
\end{abstract}

\newpage

\section{Introduction}
The Vlasov equation is a famous example of  a kinetic equation which
describes the dynamical behavior of a many-body system. In physics,
it characterizes the Hamiltonian motion of an infinite particle
system influenced by weak long-range forces in the mean field
scaling limit. The detailed exposition of the Vlasov scaling for the
Hamiltonian dynamics was given by W.Braun and K.Hepp \cite{BH1977}
and later by R.L.Dobrushin \cite{Dob1979} for more general
deterministic dynamical systems.   The limiting Vlasov-type
equations for particle densities in both papers are considered in
classes of integrable functions  (or finite measures in the weak
form). This corresponds, actually, to the situation of finite volume
systems or systems with zero mean density in an infinite volume. The
Vlasov equation for the integrable functions was investigated in
details by V.V.Kozlov \cite{Koz2008}. An excellent   review about
kinetic equations which describe dynamical multi-body systems was
given by H.Spohn \cite{Spo1980}, \cite{Spo1991}. Note that in the
framework of interacting diffusions a similar problem is known as
the McKean--Vlasov limit.

Motivated by the study of Vlasov scaling for some classes of
stochastic evolutions in continuum for which the use of the
mentioned above approaches breaks down (even in the finite volumes)
we developed the general approach to study the Vlasov-type dynamics
(see \cite{FKK2010b}). It is based on a proper scaling of the
hierarchical equations for the evolution of correlation functions
and can be interpreted  in the terms of the rescaled Markov
generators.  Up to our knowledge, at the present time only  this
technique may give a possibility to control the convergence in the
Vlasov limit in the case of non-integrable densities which is
generic for infinite volume infinite particle systems.
 Saying about the evolutions, the kinetic equations of which can not be
 studied by the classical techniques described in \cite{BH1977} and \cite{Dob1979},  we have in
mind, first of all,  spatial birth-and-death Markov processes (e.g.,
continuous Glauber dynamics, spatial ecological models) and hopping
particles Markov evolutions (e.g., Kawasaki dynamics in continuum).
The main difficulty to carry out the approach proposed by W.Braun,
K.Hepp \cite{BH1977} and R.L.Dobrushin \cite{Dob1979} for such
models is absence of the proper descriptions in terms of stochastic
evolutional equations. Another problem concerns the possible
variation of particles number in the evolution. The important point
to note also is that an application of the technique proposed in
\cite{FKK2010b} leads to a  limiting hierarchy which posses a chaos
preservation property.

\newpage The aim of this paper is to study the Vlasov scaling for the
individual based model (IBM) in spatial ecology introduced by
B.Bolker and S.Pacala \cite{BP1997, BP1999}, U.Dieckmann and R.Law
\cite{DL2000} (BDLP model) using the scheme developed in
\cite{FKK2010b}. A population in this model is represented by a
configuration of motionless organisms (plants) located in an
infinite habitat (an Euclidean space in our considerations).  The
unbounded
 habitat is taken to avoid boundary
 effects in the population evolution.

 The evolution equation for the correlation functions of the BDLP
model was studied in details in \cite{FKK2009}. In \cite{BP1997,
BP1999}, \cite{DL2000}  this system was called the system of spatial
moment equations for plant competition and, actually, this system
itself was taking as a definition of the dynamics in the BDLP model.
The mathematical structure of the correlation functions evolution
equation is close to other well-known hierarchical systems in
mathematical physics, e.g., BBGKY hierarchy for the Hamiltonian
dynamics (see, e.g. \cite{DSS1989}). As in all hierarchical chains
of equations, we can not expect the explicit form of the solution,
and even more, the existence problem for these equations is a highly
delicate question.

According to the general scheme (see \cite{FKK2010b}), we state
conditions on structural coefficients of the BDLP Markov generator,
which give a weak convergence of  the rescaled generator to the
limiting generator of  the related Vlasov hierarchy. Next, we may
compute limiting Vlasov type equation for the BDLP model leaving
 the question about the strong convergence of the
hierarchy solutions for a separate analysis.
 A control of  the strong convergence of the rescaled hierarchy  is,
 in general,  a difficult technical problem.  In particular, this problem remains be
 open for BBGKY hierarchy for the case of Hamiltonian dynamics as well
  as for Bogoliubov--Streltsova hierarchy corresponding to the gradient diffusion model.
In the present paper we show the existence of  the rescaled and limiting
evolutions of correlation functions related to the Vlasov scaling of the BDLP model
 and the convergence to the limiting evolution. With this evolution for special class of initial conditions is related a non-linear equation for the density, which is called Vlasov equation for the considered stochastic dynamics.

Let us mention  that a version of the BDLP model for the case of
finite populations was studied in the  paper \cite{FM2004}. In this
work the authors developed  a probabilistic representation for the
finite BDLP process and applied this technique to analyze a
mean-field limit in the spirit of  classical Dobrushin or
McKean--Vlasov  schemes. They obtained an integro-differential
equation for the limiting deterministic process corresponding to an
integrable initial condition. The latter equation coincides with the
Vlasov equation for the BDLP model derived below in our approach.

The present paper is organized in the following way. Section 2 is devoted to the general settings required for the description of the model which we study.  In Subsection 3.1 we discuss the general Vlasov scaling approach for spatial continuos models. Subsection 3.2 is devoted to the abstract convergence result for semigroups in Banach spaces which will be crucial to prove the main statements of the paper presented in Subsection 3.3. The corresponding proofs are given in Subsection 3.4.

\section{Basic fact and description of model}\label{sect:base}
\subsection{General facts and notations}

Let $\B({\R}^{d})$ be the family of all Borel sets in $\R^d$ and $\B_{b}({\R}^{d})$ denotes the system of all bounded sets in
$\B({\R}^{d})$.

The space of $n$-point configuration is
\begin{equation*}
\Ga _{0}^{(n)}=\Ga _{0,{\R}^{d}}^{(n)}:=\left\{ \left. \eta \subset
{\R}^{d}\right| \,|\eta |=n\right\} ,\quad n\in \N_0:=\N\cup \{0\},
\end{equation*}
where $|A|$ denotes the cardinality of the set $A$. The space
$\Ga_{\La}^{(n)}:=\Ga _{0,\La }^{(n)}$ for $\La \in \B_b({\R}^{d})$
is defined analogously to the space $\Ga_{0}^{(n)}$. As a set, $\Ga
_{0}^{(n)}$ is equivalent to the symmetrization of
\begin{equation*}
\widetilde{({\R}^{d})^n} = \left\{ \left. (x_1,\ldots ,x_n)\in
({\R}^{d})^n\right| \,x_k\neq x_l\,\,\mathrm{if} \,\,k\neq l\right\}
,
\end{equation*}
i.e. $\widetilde{({\R}^{d})^n}/S_{n}$, where $S_{n}$ is the
permutation group over $\{1,\ldots,n\}$. Hence one can introduce the
corresponding topology and Borel $\sigma $-algebra, which we denote
by $O(\Ga_{0}^{(n)})$ and $\B(\Ga_{0}^{(n)})$, respectively. Also
one can define a measure $m^{(n)}$ as an image of the product of
Lebesgue measures $dm(x)=dx$ on $\bigl(\R^d, \B(\R^d)\bigr)$.

The space of finite configurations
\begin{equation*}
\Ga _{0}:=\bigsqcup_{n\in \N_0}\Ga _{0}^{(n)}
\end{equation*}
is equipped with the topology which has structure of disjoint union.
Therefore, one can define the corresponding Borel $\sigma $-algebra
$\B (\Ga _0)$.

A set $B\in \B (\Ga _0)$ is called bounded if there exists $\La \in
\B_b({\R}^{d})$ and $N\in \N$ such that $B\subset
\bigsqcup_{n=0}^N\Ga _\La ^{(n)}$. The Lebesgue--Poisson measure
$\la_{z} $ on $\Ga_0$ is defined as
\begin{equation*}
\la _{z} :=\sum_{n=0}^\infty \frac {z^{n}}{n!}m ^{(n)}.
\end{equation*}
Here $z>0$ is the so called activity parameter. The restriction of
$\la _{z} $ to $\Ga _\La $ will be also denoted by $\la _{z} $.

The configuration space
\begin{equation*}
\Ga :=\left\{ \left. \ga \subset {\R}^{d}\ \right| \; |\ga \cap \La
|<\infty, \text{ for all } \La \in \B_b({\R}^{d})\right\}
\end{equation*}
is equipped with the vague topology. It is a Polish space (see e.g.
\cite{KK2006}). The corresponding  Borel $\sigma $-algebra $ \B(\Ga
)$ is defined as the smallest $\sigma $-algebra for which all
mappings $N_\La :\Ga \rightarrow \N_0$, $N_\La (\ga ):=|\ga \cap \La
|$ are measurable, i.e.,
\begin{equation*}
\B(\Ga )=\sigma \left(N_\La \left| \La \in
\B_b({\R}^{d})\right.\right ).
\end{equation*}
One can also show that $\Ga $ is the projective limit of the spaces
$\{\Ga _\La \}_{\La \in \B_b({\R}^{d})}$ w.r.t. the projections
$p_\La :\Ga \rightarrow \Ga _\La $, $p_\La (\ga ):=\ga _\La $, $\La
\in \B_b({\R}^{d})$.

The Poisson measure $\pi _{z} $ on $(\Ga ,\B(\Ga ))$ is given as the
projective limit of the family of measures $\{\pi _{z} ^\La \}_{\La
\in \B_b({\R}^{d})}$, where $\pi _{z} ^\La $ is the measure on $\Ga
_\La $ defined by $\pi _{z} ^\La :=e^{-z m (\La )}\la _{z}$.

We will use the following classes of functions:
$L_{\mathrm{ls}}^0(\Ga _0)$ is the set of all measurable functions
on $\Ga_0$ which have a local support, i.e. $G\in
L_{\mathrm{ls}}^0(\Ga _0)$ if there exists $\La \in \B_b({\R}^{d})$
such that $G\upharpoonright_{\Ga _0\setminus \Ga _\La }=0$;
$B_{\mathrm{bs}}(\Ga _0)$ is the set of bounded measurable functions
with bounded support, i.e. $G\upharpoonright_{\Ga _0\setminus B}=0$
for some bounded $B\in \B(\Ga_0)$.

On $\Ga $ we consider the set of cylinder functions
$\mathcal{F}_{\mathrm{cyl}}(\Ga )$, i.e. the set of all measurable functions $G$
on $\bigl(\Ga,\B(\Ga))\bigr)$ which are measurable w.r.t. $\B_\La
(\Ga )$ for some $\La \in \B_b({\R}^{d})$. These functions are
characterized by the following relation: $F(\ga )=F\upharpoonright
_{\Ga _\La }(\ga _\La )$.

The following mapping between functions on $\Ga _0$, e.g.
$L_{\mathrm{ls}}^0(\Ga _0)$, and functions on $\Ga $, e.g.
$\mathcal{F}_{\mathrm{cyl}}(\Ga )$, plays the key role in our further
considerations:
\begin{equation}
KG(\ga ):=\sum_{\eta \Subset \ga }G(\eta ), \quad \ga \in \Ga,
\label{KT3.15}
\end{equation}
where $G\in L_{\mathrm{ls}}^0(\Ga _0)$, see e.g.
\cite{KK2002,Len1975,Len1975a}. The summation in the latter
expression is taken over all finite subconfigurations of $\ga ,$
which is denoted by the symbol $\eta \Subset \ga $. The mapping $K$
is linear, positivity preserving, and invertible, with
\begin{equation}
K^{-1}F(\eta ):=\sum_{\xi \subset \eta }(-1)^{|\eta \setminus \xi
|}F(\xi ),\quad \eta \in \Ga _0.\label{k-1trans}
\end{equation}

Let $ \mathcal{M}_{\mathrm{fm}}^1(\Ga )$ be the set of all
probability measures $\mu $ on $\bigl( \Ga, \B(\Ga) \bigr)$ which
have finite local moments of all orders, i.e. $\int_\Ga |\ga _\La
|^n\mu (d\ga )<+\infty $ for all $\La \in \B_b(\R^{d})$ and $n\in
\N_0$. A measure $\rho $ on $\bigl( \Ga_0, \B(\Ga_0) \bigr)$ is
called locally finite iff $\rho (A)<\infty $ for all bounded sets
$A$ from $\B(\Ga _0)$. The set of such measures is denoted by
$\mathcal{M}_{\mathrm{lf}}(\Ga _0)$.

One can define a transform $K^{*}:\mathcal{M}_{\mathrm{fm}}^1(\Ga
)\rightarrow \mathcal{M}_{\mathrm{lf}}(\Ga _0),$ which is dual to
the $K$-transform, i.e., for every $\mu \in
\mathcal{M}_{\mathrm{fm}}^1(\Ga )$, $G\in \B_{\mathrm{bs}}(\Ga _0)$
we have
\begin{equation*}
\int_\Ga KG(\ga )\mu (d\ga )=\int_{\Ga _0}G(\eta )\,(K^{*}\mu
)(d\eta ).
\end{equation*}
The measure $\rho _\mu :=K^{*}\mu $ is called the correlation
measure of $\mu $.

As shown in \cite{KK2002} for $\mu \in
\mathcal{M}_{\mathrm{fm}}^1(\Ga )$ and any $G\in L^1(\Ga _0,\rho
_\mu )$ the series \eqref{KT3.15} is $\mu$-a.s. absolutely
convergent. Furthermore, $KG\in L^1(\Ga ,\mu )$ and
\begin{equation}
\int_{\Ga _0}G(\eta )\,\rho _\mu (d\eta )=\int_\Ga (KG)(\ga )\,\mu
(d\ga ). \label{Ktransform}
\end{equation}

A measure $\mu \in \mathcal{M}_{\mathrm{fm} }^1(\Ga )$ is called
locally absolutely continuous w.r.t. $\pi _{z} $ iff $\mu_\La :=\mu
\circ p_\La ^{-1}$ is absolutely continuous with respect to $\pi
_{z} ^\La $ for all $\La \in \B_\La ({\R}^{d})$. In this case $\rho
_\mu :=K^{*}\mu $ is absolutely continuous w.r.t $\la _{z} $. We
denote
\begin{equation*}
k_{\mu}(\eta):=\frac{d\rho_{\mu}}{d\la_{z}}(\eta),\quad
\eta\in\Ga_{0}.
\end{equation*}
The functions
\begin{equation}
k_{\mu}^{(n)}:(\R^{d})^{n}\longrightarrow\R_{+}\label{corfunc}
\end{equation}
\begin{equation*} k_{\mu}^{(n)}(x_{1},\ldots,x_{n}):=\left\{\begin{array}{ll}
k_{\mu}(\{x_{1},\ldots,x_{n}\}), & \mbox{if $(x_{1},\ldots,x_{n})\in
\widetilde{(\R^{d})^{n}}$}\\ 0, & \mbox{otherwise}
\end{array}
\right.\end{equation*} are the correlation functions well known in
statistical physics, see e.g. \cite{Rue1969}, \cite{Rue1970}.


\subsection{Description of model}

We consider the evolving in time system of interacting individuals (particles) in the
space $\R^{d}$. The state of the system at the
fixed moment of time $t>0$ is described by the random configuration
$\ga_{t}$ from $\Ga$. Heuristically, the mechanism of the evolution is given by a
Markov generator, which has the following form
\[
L:=L^- + L^+ ,
\]
where
\begin{align}
(L^-F)(\ga)&:=(L^-(m,\varkappa^-,a^-)F)(\ga):=
\sum_{x\in\ga}\left[m+\varkappa^{-}\sum_{y\in\ga\setminus
x}a^{-}(x-y)\right]D_{x}^{-}F(\ga),\notag\\
(L^+F)(\ga)&:=(L^+(\varkappa^+,a^-)F)(\ga):=
\varkappa^{+}\int_{\R^{d}}\sum_{y\in\ga}a^{+}(x-y)D_{x}^{+}F(\ga)dx.
\label{BP-gen}
\end{align}
Here $0\leq a^{-}, \,a^{+}\in L^{1}(\R^{d})$ are arbitrary, even functions
such that
\begin{equation*}
\int_{\R^{d}}a^{-}(x)dx =\int_{\R^{d}}a^{+}(x)dx =1
\end{equation*}
(in other words, $a^{-},\,a^{+}$ are probability densities) and $m$,
$\varkappa^{-}$, $\varkappa^{+}>0$ are some positive constants.

The pre-generator $L$ describes the Bolker--Dieckmann--Law--Pacala BDLP
mo\-del, which was introduced in \cite{BP1997,BP1999,DL2000}. During
the corresponding stochastic evolution the birth of individuals
occurs independently and the death is ruled not only by the global
regulation (mortality) but also by the local regulation with the
kernel $\varkappa^-a^-$. This regulation may be described as a
competition (e.g., for resources) between individuals in the
population.

The evolution of the one dimensional distribution for such systems
can be expressed in terms of their characteristics, e.g. the
correlation functions (see \eqref{corfunc}). The dynamics of correlation functions for the
BDLP model was studied in \cite{FKK2009}. The main result of this
paper informally says the following:

{\em If the mortality $m$ and the competition kernel
$\varkappa^-a^-$ are large enough, then the dynamics of correlation
functions, associated with the pre-generator \eqref{BP-gen}, exists and preserves
(sub-)Poissonian bound.}

For the readers convenience we repeat below the relevant material
from \cite{FKK2009} without proofs.

Let $\hat{L}^\pm:= K^{-1} L^\pm K$ be the $K$-image of $L^\pm$,
which can be initially defined on functions from
$B_{\mathrm{bs}}(\Ga _0)$. For arbitrary and fixed $C>0$ we consider
the operator $\hat{L}:=\hat{L}^+ + \hat{L}^-$ in the functional
space
\begin{equation*}
\mathcal{L}_{C}=L^{1}\left( \Gamma _{0},C^{\left\vert \eta
\right\vert }d\lambda \left( \eta \right) \right) .
\end{equation*}
Below, symbol $\left\Vert \cdot \right\Vert_{C} $ stands for the
norm of this space.

For any $\omega>0$ we define $\mathcal{H}(\omega)$ to be the set of
all densely defined closed operators $T$ on $\mathcal{L}_{C},$ the
resolvent set $\rho(T)$ of which contains the sector
\begin{equation*}
\func{Sect}\left(\frac{\pi}{2}+\omega\right):=\left\{\zeta\in\mathbb{C}\,\Bigm|
|\func{arg}\, \zeta|<\frac{\pi}{2}+\omega\right\},
\end{equation*}
and for any $\varepsilon >0$
\begin{equation*}
||(T-\zeta 1\!\!1)^{-1}||\leq \frac{M_{\varepsilon}}{|\zeta|},\quad
|\arg\,\zeta |\leq\frac{\pi}{2}+\omega-\varepsilon,
\end{equation*}
where $M_{\varepsilon}$ does not depend on $\zeta$.

The first non-trivial result, which is based on the perturbation theory,
says that the operator $\hat{L}$ with the domain
\begin{equation*}
D(\hat{L}):=\left\{ G\in \mathcal{L}_{C} \Bigm| \left\vert
\cdot \right\vert G(\cdot) \in \mathcal{L}_{C}, \;E^{a^{-}}(\cdot) G(\cdot) \in \mathcal{L}_{C}\right\}
\end{equation*}
is a generator of a holomorphic $C_{0}$-semigroup $\hat{U}_t$
on $\L_C$.

To construct the corresponding evolution of correlation functions we note that the dual space
$(\L_C)'=\bigl(L^1(\Ga_0, d\la_C)\bigr)'=L^\infty(\Ga_0, d\la_C)$, where $d\la_C:= C^{|\cdot|} d\la$.
The space $(\L_C)'$ is isometrically isomorphic to the Banach
space
\begin{equation*}
\K_{C}:=\left\{k:\Ga_{0}\goto\R\,\Bigm| k(\cdot) C^{-|\cdot|}\in
L^{\infty}(\Ga_{0},\la)\right\}
\end{equation*}
with the norm
\begin{equation*}
\|k\|_{\K_C}:=\|C^{-|\cdot|}k(\cdot)\|_{L^{\infty}(\Ga_{0},\la)},
\end{equation*}
where the isomorphism is provided by the isometry $R_C$
\begin{equation}\label{isometry}
(\L_C)'\ni k  \longmapsto R_Ck:=k(\cdot) C^{|\cdot|}\in \K_C.
\end{equation}
In fact, we have duality between Banach spaces $\L_C$
and $\K_C$ given by the following expression
\begin{equation}
\llu  G,\,k \rru  := \int_{\Ga_{0}}G\cdot k\, d\la,\quad G\in\L_C, \
k\in \K_C \label{duality}
\end{equation}
with
\begin{equation}
\lv  \llu   G,k \rru  \rv \leq \|G\|_C \cdot\|k\|_{\K_C}.
\label{funct_est}
\end{equation}
It is clear that for any $k\in \K_C$
\begin{equation}\label{RB-norm}
|k(\eta)|\leq \|k\|_{\K_C} \, C^{|\eta|} \quad \text{for } \la\text{-a.a. }
\eta\in\Ga_0.
\end{equation}

Let $\hL '$ be the adjoint operator to $\hL $ in $(\L_C)'$ with domain
$D(\hL ')$. Its image in $\K_C$ under the isometry
$R_C$ we denote by $\hL^{*}=R_C\hL 'R_{C^{-1}}$. It is evident that the domain of $\hL^{*}$ will be $D(\hL
^{*})=R_C
 D(\hL ')$, correspondingly. Then, for any $G\in\L_C$, $k\in D(\hL^\ast)$
\begin{align*}
\int_{\Ga_0}G\cdot \hL^\ast k d\la&=\int_{\Ga_0}G\cdot R_C\hL
'R_{C^{-1}} k d\la=\int_{\Ga_0}G\cdot \hL 'R_{C^{-1}} k d\la_C\\&=
\int_{\Ga_0}\hL G\cdot R_{C^{-1}} k d\la_C=\int_{\Ga_0}\hL G\cdot k
d\la,
\end{align*}
therefore, $\hL^\ast$ is the dual operator to $\hL $ w.r.t. the duality
\eqref{duality}. By \cite{FKO2009}, we have the precise form of
$\hL^{*}$:
\begin{align}\label{dual-descent}
(\hL^* k)(\eta)=&-\left(m|\eta|+\varkappa^{-}E^{a^{-}}(\eta)\right)k(\eta)\\&+
\varkappa^{+}\sum_{x\in\eta}\sum_{y\in\eta\setminus x}a^{+}(x-y)k(\eta\setminus x)\notag\\&
+\varkappa^{+}\int_{\R^{d}}\sum_{y\in\eta}a^{+}(x-y)k((\eta\setminus y)\cup
x)dx\notag\\&
+\varkappa^{-}\int_{\R^{d}}\sum_{y\in\eta}a^{-}(x-y)k(\eta\cup x)dx.
\notag
\end{align}
Now we consider the adjoint semigroup $\hT '(t)$ on $(\L_C)'$ and
its image $\hT^\ast(t)$ in $\K_C$. The latter one describes the
evolution of correlation functions. Transferring the general results
about adjoint semigroups (see, e.g., \cite{EN2000}) onto semigroup
$\hT^\ast(t)$ we deduce that it will be weak*-continuous and
weak*-differentiable at $0$. Moreover, $\hL^\ast$ will be the
weak*-generator of $\hT^\ast(t)$. Here and subsequently we mean
``weak*-properties'' w.r.t. the duality \eqref{duality}.
\section{Vlasov scaling}
\subsection{Description of Vlasov scaling}
We begin with a general idea of the Vlasov-type
scaling. It is of interest to construct some scaling $L_\eps$, $\eps>0$ of the generator $L$,
such that the following scheme works.

Suppose that we know the proper scaling of $L$ and we are able to prove the existence of
 the semigroup $\hT_\eps(t)$ with the generator $\hL
_\eps:=K^{-1} L_\eps K$ in the space $\L_C$ for some $C>0$.
Let us consider the
Cauchy problem corresponding to the adjoint operator $\hL^\ast$ and take an initial
function with the strong singularity in $\eps$. Namely,
$$k_0^\e(\eta) \sim \eps^{-|\eta|} r_0(\eta),\quad\quad\eps\goto 0,\quad\quad
\eta\in\Ga_0,$$ where the function $r_0$ is independent of $\eps$.
The solution to this problem is described by the dual semigroup
$\hT_\eps^\ast(t)$. The scaling $L\mapsto L_\eps$ has to be chosen
in such a way that $\hT_\eps^\ast(t)$  preserves the order of the
singularity:
$$(\hT_\eps^\ast(t)k_0^\e)(\eta) \sim \eps^{-|\eta|} r_t(\eta),\quad\quad\eps\goto 0,
\quad\quad\eta\in\Ga_0.$$ Another very important requirement for the proper scaling concerns the
dynamics $r_0 \mapsto r_t$. It should preserve Lebesgue--Poisson
exponents: if $r_0(\eta)=e_\la(\rho_0,\eta)$ then
$r_t(\eta)=e_\la(\rho_t,\eta)$ and there exists explicit (nonlinear,
in general) differential equation for $\rho_t$
\begin{equation}\label{V-eqn-gen}
\frac{\partial}{\partial t}\rho_t(x) = \upsilon(\rho_t(x)),
\end{equation}
which will be called the Vlasov-type equation.

Now let us explain the main technical steps to realize Vlasov-type scaling. Let us
consider for any $\eps>0$ the following mapping (cf. \eqref{isometry})
on functions on $\Ga_0$
\begin{equation}
(R_\eps r)(\eta):=\eps^\n r(\eta).
\end{equation}
This mapping is ``self-dual'' w.r.t. the duality \eqref{duality},
moreover, $R_\eps^{-1}=R_{\eps^{-1}}$. Then we have $k^\e_0\sim
R_{\eps^{-1}} r_0$, and we need $r_t \sim R_\eps
\hT_\eps^\ast(t)k_0^\e \sim  R_\eps \hT _\eps^\ast(t)R_{\eps^{-1}}
r_0$. Therefore, we have to show that for any $t\geq 0$ the operator
family $R_\eps \hT _\eps^\ast(t)R_{\eps^{-1}}$, $\eps>0$ has
limiting (in a proper sense) operator $U(t)$ and
\begin{equation}\label{chaospreserving}
U(t)e_\la(\rho_0)=e_\la(\rho_t).
\end{equation}
But, informally, $\hT^\ast_\eps(t)=\exp{\{t\hL^\ast_\eps\}}$ and
$R_\eps \hT_\eps^\ast(t)R_{\eps^{-1}}=\exp{\{t R_\eps \hL_\eps^\ast
R_{\eps^{-1}} \}}$. Let us consider the ``renormalized'' operator
\begin{equation}\label{renorm_def}
\hLrenadj :=  R_\eps \hL_\eps^\ast R_{\eps^{-1}}.
\end{equation}
In fact, we need that there exists an operator $\hat{V}^\ast$ (called Vlasov operator) such
that $\exp{\{t R_\eps \hL_\eps^\ast R_{\eps^{-1}} \}}\goto
\exp{\{t\hat{V}^\ast\}=:U(t)}$ for which \eqref{chaospreserving}
holds.
Hence, heuristic way to produce
the scaling $L\mapsto L_\eps$ is to demand that
\begin{equation*}
\lim_{\eps\goto 0}\left(\frac{\partial}{\partial
t}e_\la(\rho_t,\eta)-\hLrenadj e_\la(\rho_t,\eta)\right)=0, \quad
\eta\in\Ga_0,
\end{equation*}
if $\rho_t$ satisfies \eqref{V-eqn-gen}. The point-wise limit of
$\hLrenadj$ will be natural candidate for $\hat{V}^\ast$.
Having chosen the proper scaling we proceed to the following technical steps which give the
rigorous meaning to the idea introduced above. Note that definition  \eqref{renorm_def} implies $\hL_\ren=R_{\eps^{-1}}\hL
_\eps R_\eps$. We prove that  ``renormalized'' operator $\hL_\ren$ is a generator of a contraction semigroup $\hT_\ren(t)$ on
$\L_C$. Next we show that this semigroup converges strongly to some
semigroup $\hT_V(t)$ with the generator $\hat{V} $. This limiting semigroup leads us directly to the solution for the Vlasov-type equation. Below we show how to realize this scheme in details.

\subsection{Approximation in Banach space}

In this subsection we study general question about the strong convergence of semigroups in Banach spaces. The obtained results will be crucial in the realization of the Vlasov-type scaling for the BDLP model.

Let $\left\{U_{t}^{\varepsilon},\,t\geq 0\right\},\: \varepsilon\geq
0$ be a family of semigroups on a Banach space $E$. We set
$(L_{\varepsilon},\,D(L_{\varepsilon}))$ to be the generator of
$\left\{U_{t}^{\varepsilon},\,t\geq 0\right\}$ for each
$\varepsilon\geq 0$. Our purpose now is to describe the strong
convergence of semigroups $\left\{U_{t}^{\varepsilon},\,t\geq
0\right\},\: \varepsilon\geq 0$ in terms of the corresponding
generators as $\varepsilon$ tends to $0$.  According to the
classical result (see e.g. \cite{Kat1976}), it is enough to show
that there exists $\beta >0$ and $\la:\; \mathrm{Re}\,\la >\beta$ such that
\begin{equation}
\left(L_{\varepsilon}-\la1\!\!1\right)^{-1}\overset{s}{\longrightarrow}\left(L_{0}-\la1\!\!1\right)^{-1},\quad\quad\varepsilon\rightarrow 0,\label{resconv}
\end{equation}
where $1\!\!1$ is the identical operator.
In this subsection we show how to prove \eqref{resconv} under the
following assumptions on the family
$(L_{\varepsilon},\,D(L_{\varepsilon}))$, $\varepsilon \geq 0$:

\newpage{\bf { Assumptions (A)}}:
\begin{enumerate}
\item For any $\varepsilon\geq 0$, the operator $(L_{\varepsilon},\,D(L_{\varepsilon}))$ admits representation $$L_{\varepsilon}=A_{1}(\varepsilon)+A_{2}(\varepsilon),$$
where $A_{1}(\varepsilon)$ is a closed operator and $D(A_{1}(\varepsilon))=D(A_{2}(\varepsilon)):=D(L_{\varepsilon})$.
\item There exists $\beta>0$ and $\la$: $\mathrm{Re} \,\la >\beta$ such that
\begin{enumerate}
\item $\la$ belongs to the resolvent set of $A_{1}(\varepsilon)$ for any $\varepsilon \geq 0$ and
\begin{equation*}
\left( A_{1}(\varepsilon)-\lambda 1\!\!1\right) ^{-1}\overset{s}{
\longrightarrow }\left( A_{1}(0)-\lambda1\!\!1 \right) ^{-1},\varepsilon \rightarrow
0,
\end{equation*}
\item
\begin{equation*}
\sup_{\varepsilon >0}\left\Vert \left( A_{1}(\varepsilon)-\lambda 1\!\!1 \right)
^{-1}\right\Vert \leq \left\Vert \left( A_{1}(0)-\lambda 1\!\!1\right) ^{-1}\right\Vert ,
\end{equation*}
\item for any $\varepsilon \geq 0$
\begin{equation*}
\left\Vert A_{2}(\varepsilon)\left( A_{1}(\varepsilon)-\lambda 1\!\!1 \right)
^{-1}\right\Vert < 1,
\end{equation*}
\item
$\left( A_{2}(\varepsilon)\left( A_{1}(\varepsilon)
-\lambda 1\!\!1 \right) ^{-1}+1\!\!1 \right) ^{-1}$ converges strongly to the operator $\left( A_{2}(0)\left( A_{1}(0)-\lambda 1\!\!1 \right) ^{-1}+1\!\!1\right) ^{-1}$ as $\varepsilon \rightarrow 0$.
\end{enumerate}
\end{enumerate}

The strong convergence result for the family $\left\{U_{t}^{\varepsilon},\,t\geq 0\right\},\: \varepsilon\geq 0$ is established by our next theorem

\begin{theorem}\label{gentheor}
Let $(L_{\varepsilon},\,D(L_{\varepsilon}))$, $\varepsilon \geq 0$
be the family of generators corresponding to the family of
$C_{0}$-semigroups $\left\{U_{t}^{\varepsilon},\,t\geq 0\right\},\:
\varepsilon\geq 0$. Then, $U_{t}^{\varepsilon}$ converges strongly
to $U_{t}^{0}$ as $\varepsilon \rightarrow 0$ uniformly on each
finite interval of time, provided assumptions {\bf (A)} are
satisfied.
\end{theorem}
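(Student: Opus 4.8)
The plan is to reduce the assertion to the single-point resolvent convergence \eqref{resconv} and then to extract that convergence from Assumptions \textbf{(A)} by means of an explicit factorisation of the resolvent of $L_\eps$. First I would invoke the reduction already recorded before the statement: since every $U_t^\eps$ (including $U_t^0$) is, by hypothesis, a $C_0$-semigroup with generator $L_\eps$, the classical approximation theorem (see \cite{Kat1976}, \cite{EN2000}) guarantees that $U_t^\eps \s U_t^0$ uniformly on each finite time interval as soon as the strong resolvent convergence \eqref{resconv} holds at one point $\la$ with $\mathrm{Re}\,\la>\beta$. Thus the whole task collapses to verifying \eqref{resconv}, and for this it suffices to produce a manageable formula for $(L_\eps-\la\1)^{-1}$.

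Second, using the splitting $L_\eps=A_1(\eps)+A_2(\eps)$ from (A1) together with the invertibility of $A_1(\eps)-\la\1$ from (A2a), I would write on the common domain $D(L_\eps)$
\begin{equation*}
L_\eps-\la\1=\bigl(\1+A_2(\eps)(A_1(\eps)-\la\1)^{-1}\bigr)(A_1(\eps)-\la\1).
\end{equation*}
By (A2c) the operator $A_2(\eps)(A_1(\eps)-\la\1)^{-1}$ has norm strictly less than $1$, so the left factor is boundedly invertible via a Neumann series; the right factor is a bijection of $D(L_\eps)$ onto $E$ with bounded inverse. Hence $\la\in\rho(L_\eps)$ and
\begin{equation*}
(L_\eps-\la\1)^{-1}=(A_1(\eps)-\la\1)^{-1}\bigl(\1+A_2(\eps)(A_1(\eps)-\la\1)^{-1}\bigr)^{-1}.
\end{equation*}
The same identity at $\eps=0$ exhibits $(L_0-\la\1)^{-1}$ as the product of $(A_1(0)-\la\1)^{-1}$ and $\bigl(\1+A_2(0)(A_1(0)-\la\1)^{-1}\bigr)^{-1}$, which is precisely the limiting object named in (A2a) and (A2d).

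Third, I would establish the elementary but decisive product lemma: if $B_\eps \s B_0$ with $\sup_\eps\|B_\eps\|<\infty$ and $C_\eps \s C_0$, then $B_\eps C_\eps \s B_0C_0$. This follows from the splitting $B_\eps C_\eps x-B_0C_0x=B_\eps(C_\eps-C_0)x+(B_\eps-B_0)(C_0x)$: the first term is bounded by $\sup_\eps\|B_\eps\|\,\|(C_\eps-C_0)x\|\to 0$, and the second tends to $0$ by strong convergence of $B_\eps$ evaluated at the fixed vector $C_0x$. Applying the lemma with $B_\eps=(A_1(\eps)-\la\1)^{-1}$, which is strongly convergent by (A2a) and uniformly bounded by (A2b), and with $C_\eps=\bigl(\1+A_2(\eps)(A_1(\eps)-\la\1)^{-1}\bigr)^{-1}$, strongly convergent by (A2d), yields $(L_\eps-\la\1)^{-1}\s(L_0-\la\1)^{-1}$, i.e. \eqref{resconv}, which finishes the proof.

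The main obstacle---and the reason Assumptions \textbf{(A)} are arranged as they are---lies in this last step: a product of two merely strongly convergent operator families need not converge strongly, so one factor must be uniformly bounded. Hypothesis (A2b) supplies exactly this uniform bound for $(A_1(\eps)-\la\1)^{-1}$, which is why that factor must sit on the left of the product; correspondingly (A2d) is stated directly for the inverse $C_\eps$ rather than for $A_2(\eps)(A_1(\eps)-\la\1)^{-1}$, so that no separate boundedness of $C_\eps$ is required. A secondary point deserving attention is the bookkeeping of domains in the factorisation, which is harmless here because (A1) forces $A_1(\eps)$ and $A_2(\eps)$ to share the domain $D(L_\eps)$.
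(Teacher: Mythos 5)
Your proposal is correct and follows essentially the same route as the paper: the same reduction to resolvent convergence at a single point, the same factorisation $L_\eps-\la\1=\bigl(A_2(\eps)(A_1(\eps)-\la\1)^{-1}+\1\bigr)(A_1(\eps)-\la\1)$ justified by (A2c), and the same product representation of the resolvent whose strong convergence is then read off from (A2a), (A2b) and (A2d). Your explicit product lemma merely spells out what the paper compresses into ``from this formula, triangle inequality and assumptions 2(a), 2(b) and 2(d)'', and your remark on why the uniform bound must attach to the left factor is a fair articulation of the role of (A2b).
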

\begin{proof}
The proof is completed by showing \eqref{resconv}. For any
$\varepsilon\geq 0$ and $\lambda$ from the resolvent set of $A_{1}(\varepsilon)$ we have $$\mathrm{Ran}\left( \left(
A_{1}(\varepsilon)-\lambda 1\!\!1\right) ^{-1}\right) =D\left(
A_{1}(\varepsilon)\right) =D\left( A_{2}(\varepsilon)\right) =
D(L_{\varepsilon}). $$ Hence,
\begin{align}
& L_{\varepsilon}-\lambda1\!\!1=A_{1}(\varepsilon)+A_{2}(\varepsilon)-\lambda1\!\!1\nonumber \\
=&\left( A_{2}(\varepsilon)\left( A_{1}(\varepsilon)-\lambda 1\!\!1 \right) ^{-1}+1\!\!1\right)
\left( A_{1}(\varepsilon)-\lambda 1\!\!1\right).  \label{o1}
\end{align}
Combining \eqref{o1} with the assumption 2(c) of {\bf (A)} we get
the following representations for the resolvent
\begin{align}
&\left( L_{\varepsilon}-\lambda1\!\!1\right)^{-1}=\left( A_{1}(\varepsilon)+A_{2}(\varepsilon)-\lambda 1\!\!1
\right)
^{-1}  \label{longexpansresolv} \nonumber\\
=&\left( A_{1}(\varepsilon)-\lambda1\!\!1 \right) ^{-1}\left(
A_{2}(\varepsilon)\left( A_{1}(\varepsilon)-\lambda 1\!\!1\right)
^{-1}+1\!\!1\right) ^{-1}.
\end{align}
From this formula, triangle inequality and assumptions 2(a), 2(b) and 2(d) of {\bf (A)} we conclude the assertion of the theorem.
\end{proof}

\subsection{Main results}
We check at once that the proper scaling for the BDLP pre-generator is the following one
\begin{align}
(L_\eps F)(\ga):=&
\sum_{x\in\ga}\left[m+\eps\varkappa^{-}\sum_{y\in\ga\setminus
x}a^{-}(x-y)\right]D_{x}^{-}F(\ga)\label{resc BP-gen}
\\
&+\varkappa^{+}\int_{\R^{d}}\sum_{y\in\ga}a^{+}(x-y)D_{x}^{+}F(\ga)dx,
\quad\quad\quad \eps>0. \notag
\end{align}

Next we consider the formal $K$-image of $L_{\varepsilon}$ and the corresponding renormalized operator on  $B_{\mathrm{bs}}(\Ga_0)$:
\begin{equation*}
\hL_\eps G:=K^{-1}L_\eps K G; \qquad \hL_\ren G:=R_{\eps^{-1}}\hL
_\eps R_\eps G.
\end{equation*}
In the proposition below we calculate the precise form of the operator  $\hL_\ren$ for the BDLP model.
\begin{proposition}
For any $\varepsilon>0$ and any $G\in B_{\mathrm{bs}}\left( \Gamma _{0}\right) $
\begin{align*}
\hat{L}_{\varepsilon ,\mathrm{ren}}G =&A_{1}G+A_{2}G+\varepsilon
\left( B_{1}G+B_{2}G\right) ,
\end{align*}%
where%
\begin{align*}
(A_{1}G)\left( \eta \right)  =&-m\left\vert \eta \right\vert G\left(
\eta
\right) , \\
(A_{2}G)\left( \eta \right)  =&-\varkappa ^{-}\sum_{x\in \eta
}\sum_{y\in
\eta \setminus x}a^{-}\left( x-y\right) G\left( \eta \setminus x\right)  \\
&+\varkappa ^{+}\sum_{y\in \eta }\int_{\mathbb{R}^{d}}a^{+}\left(
x-y\right) G\left( \eta \setminus y\cup x\right) dx, \\
(B_{1}G)\left( \eta \right)  =&-\varkappa ^{-}E^{a^{-}}\left( \eta
\right)
G\left( \eta \right) , \\
(B_{2}G)\left( \eta \right)  =&\,\varkappa ^{+}\sum_{y\in \eta }\int_{\mathbb{%
R}^{d}}a^{+}\left( x-y\right) G\left( \eta \cup x\right) dx.
\end{align*}
\end{proposition}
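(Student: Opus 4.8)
The plan is to compute the unrenormalized $K$-image $\hL_\eps=K^{-1}L_\eps K$ on $B_{\mathrm{bs}}(\Ga_0)$ explicitly, and then to conjugate it by $R_\eps$, collecting terms according to their power of $\eps$. First I would write down $\hL_\eps$ using the combinatorial rules for the $K$-transform of birth-and-death operators (the same computation that, in the cited works \cite{FKO2009, FKK2009}, produces the dual expression \eqref{dual-descent}), specialized to the scaled generator \eqref{resc BP-gen}. Since the scaling $L\mapsto L_\eps$ merely replaces the local competition coefficient $\varkappa^-$ by $\eps\varkappa^-$ and leaves the mortality $m$ and the birth kernel $\varkappa^+a^+$ untouched, the resulting operator has five terms: a diagonal mortality term $-m|\eta|G(\eta)$; a diagonal competition term $-\eps\varkappa^-E^{a^-}(\eta)G(\eta)$ with $E^{a^-}(\eta)=\sum_{x\in\eta}\sum_{y\in\eta\setminus x}a^-(x-y)$; an off-diagonal competition term $-\eps\varkappa^-\sum_{x\in\eta}\sum_{y\in\eta\setminus x}a^-(x-y)\,G(\eta\setminus x)$; and, from the birth part, a number-conserving piece $\varkappa^+\sum_{y\in\eta}\int_{\R^d}a^+(x-y)\,G(\eta\setminus y\cup x)\,dx$ together with a particle-adding piece $\varkappa^+\sum_{y\in\eta}\int_{\R^d}a^+(x-y)\,G(\eta\cup x)\,dx$. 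Deriving these five terms from \eqref{KT3.15}--\eqref{k-1trans} via the Lebesgue--Poisson transposition identities is the technical heart of the argument.

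Next I would apply the renormalization $\hL_\ren=R_{\eps^{-1}}\hL_\eps R_\eps$. Because $(R_\eps G)(\eta)=\eps^{|\eta|}G(\eta)$, conjugation multiplies a term of $\hL_\eps$ that sends $G$ to (a sum or integral of) $G(\phi(\eta))$ by the factor $\eps^{\Delta}$, where $\Delta=|\phi(\eta)|-|\eta|$ is the cardinality change of its argument map (the factor $\eps^{|\eta|}$ from $R_\eps$ is read off at the shifted argument, while $R_{\eps^{-1}}$ restores $\eps^{-|\eta|}$). Running through the five terms: the mortality term has $\Delta=0$ and no scaling factor, so it stays at order $1$ and gives $A_1$; the diagonal competition term has $\Delta=0$ but carries one scaling $\eps$, so it stays at order $\eps$ and gives $\eps B_1$; the off-diagonal competition term has $\Delta=-1$ and one scaling $\eps$, which cancel, dropping it to order $1$ and giving the first term of $A_2$; the number-conserving birth term has $\Delta=0$ and no scaling factor, staying at order $1$ and giving the second term of $A_2$; and the particle-adding birth term has $\Delta=+1$ and no scaling factor, so it is promoted to order $\eps$ and gives $\eps B_2$.

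Collecting by powers of $\eps$ then yields exactly $\hL_\ren G=A_1G+A_2G+\eps(B_1G+B_2G)$. The only delicate point is this $\eps$-bookkeeping, in particular the asymmetry between the off-diagonal competition term, whose scaling $\eps$ is annihilated by the cardinality drop, and the particle-adding birth term, which carries no scaling factor yet gains an $\eps$ from the cardinality increase; it is precisely this that produces the combination $A_2+\eps(B_1+B_2)$ rather than a homogeneous rescaling. I expect the genuine obstacle to lie entirely in the first step, the combinatorial evaluation of $K^{-1}L_\eps K$, after which the collection of $\eps$-powers is routine.
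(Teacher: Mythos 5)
Your proposal is correct and takes essentially the same route as the paper: the paper likewise first records $\hat{L}_{\varepsilon}$ as exactly the five terms you list (deferring their derivation to the known $K$-images of $\hat{L}^{\pm}$ from the cited earlier work) and then conjugates by $R_{\varepsilon}$, with precisely the $\varepsilon$-bookkeeping by cardinality change that you describe.
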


\begin{proof}
According to the definition, we have $\hat{L}_{\varepsilon ,\mathrm{ren}}=R_{\varepsilon ^{-1}}\hat{L}%
_{\varepsilon }R_{\varepsilon }$, where $$\hat{L}_{\varepsilon }=\hat{L}%
^{-}\left( m,\varepsilon \varkappa ^{-}a^{-}\right) +\varepsilon ^{-1}\hat{L}%
^{+}\left( \varepsilon \varkappa ^{+}a^{+}\right).$$ As a result,%
\begin{align*}
(\hat{L}_{\varepsilon }G)\left( \eta \right) =&\,(A_{1}G)\left( \eta
\right)
+\varepsilon (B_{1}G)\left( \eta \right) +(B_{2}G)\left( \eta \right) \\
&-\varepsilon \varkappa ^{-}\sum_{x\in \eta }\sum_{y\in \eta
\setminus
x}a^{-}\left( x-y\right) G\left( \eta \setminus x\right) \\
&+\varkappa ^{+}\sum_{y\in \eta }\int_{\mathbb{R}^{d}}a^{+}\left(
x-y\right) G\left( \eta \setminus y\cup x\right) dx.
\end{align*}%
and hence
\begin{equation*}
(\hat{L}_{\varepsilon ,\mathrm{ren}}G)\left( \eta \right) =(A_{1}G)\left(
\eta \right) +(A_{2}G)\left(
\eta \right)+\varepsilon (\left( B_{1}+B_{2}\right) G)\left( \eta
\right) ,
\end{equation*}
which completes the proof.
\end{proof}
\begin{remark}
It is easily seen that the operator $\hat{V}: =A_{1}+A_{2}$ will be the point-wise limit of $\hL_\ren$ as $\varepsilon$ tends to 0. Therefore, the adjoint operator to $\hat{V}$ w.r.t. to the duality \eqref{duality} (if it exists) can be considered as a candidate for the Vlasov operator in our model.
\end{remark}

Below we give the rigorous meaning to the operator $\hat{L}_{\varepsilon ,\mathrm{ren}}$.
Let us  define the set
$$
D_{1}:=\left\{ G\in \mathcal{L}_{C}|\:
E^{a^{-}}\left(
\cdot  \right) G\left(
\cdot  \right)\in \mathcal{L}_{C},\;\left\vert \cdot \right\vert G\left(
\cdot  \right)\in \mathcal{L}_{C}\right\}
$$

\begin{proposition} \label{pr1}
For any $\varepsilon,\,m,\,\varkappa ^{-}, C>0$ the operator
\begin{equation}
A_{1}(\varepsilon):=A_{1}+\varepsilon B_{1}
\end{equation}
with the domain $D_{1}$
is a generator of a contraction $C_{0}$-semigroup on $\mathcal{L}_{C}$. Moreover, $A_{1}(\varepsilon)\in \mathcal{H}\left( \omega\right) $ for all $\omega
\in \left( 0;\frac{\pi }{2}\right) $.
\end{proposition}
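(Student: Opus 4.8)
The plan is to recognize that $A_{1}(\varepsilon)$ is simply the operator of multiplication by the nonnegative function
\[
M(\eta):=m|\eta|+\varepsilon\varkappa^{-}E^{a^{-}}(\eta)\geq 0,\qquad \eta\in\Gamma_{0},
\]
which is finite for every finite configuration and $\B(\Gamma_{0})$-measurable. Once this is noticed, every assertion reduces to standard facts about multiplication operators on the $L^{1}$-space $\mathcal{L}_{C}=L^{1}(\Gamma_{0},C^{|\cdot|}\,d\lambda)$, and the hypotheses $m,\varkappa^{-},\varepsilon,C>0$ serve only to guarantee $M\geq 0$.

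First I would pin down the domain. Since $m|\eta|$ and $\varepsilon\varkappa^{-}E^{a^{-}}(\eta)$ are both nonnegative, for every $G$ one has $|M(\eta)G(\eta)|\,C^{|\eta|}=m|\eta|\,|G(\eta)|\,C^{|\eta|}+\varepsilon\varkappa^{-}E^{a^{-}}(\eta)\,|G(\eta)|\,C^{|\eta|}$; integrating, $MG\in\mathcal{L}_{C}$ holds iff both $|\cdot|\,G\in\mathcal{L}_{C}$ and $E^{a^{-}}G\in\mathcal{L}_{C}$, i.e.\ the maximal domain of multiplication by $M$ is exactly $D_{1}$. Density of $D_{1}$ follows by truncation: for $G\in\mathcal{L}_{C}$, the functions coinciding with $G$ on $\{M\leq N\}$ and vanishing elsewhere lie in $D_{1}$ and converge to $G$ in $\mathcal{L}_{C}$ by dominated convergence. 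Closedness is the usual closedness of a multiplication operator by an almost-everywhere finite function.

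For the contraction $C_{0}$-semigroup I would exhibit it explicitly as $(U_{t}G)(\eta):=e^{-tM(\eta)}G(\eta)$. The semigroup law is immediate, the contraction property follows from $0\leq e^{-tM}\leq 1$, and both strong continuity and the identification of the generator come from dominated convergence: for $G\in D_{1}$ the bound $1-e^{-s}\leq s$ gives $\bigl|t^{-1}(e^{-tM}-1)G\bigr|\leq M|G|\in\mathcal{L}_{C}$, while pointwise $t^{-1}(e^{-tM}-1)G\to -MG$, so $t^{-1}(U_{t}G-G)\to -MG$ in $\mathcal{L}_{C}$; the reverse inclusion for the generator domain is obtained by passing to an almost-everywhere convergent subsequence of the difference quotients. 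Hence the generator is precisely $(A_{1}(\varepsilon),D_{1})$.

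Finally, for $A_{1}(\varepsilon)\in\mathcal{H}(\omega)$, the resolvent $(A_{1}(\varepsilon)-\zeta\1)^{-1}$ is again multiplication, by $(-M(\eta)-\zeta)^{-1}$, of norm $\bigl(\inf_{\eta}|M(\eta)+\zeta|\bigr)^{-1}$. The whole matter reduces to the elementary estimate that for $M\geq 0$ and $|\arg\zeta|\leq\frac{\pi}{2}+\omega-\delta$ one has $|M+\zeta|\geq|\zeta|\cos(\omega-\delta)$: for $|\arg\zeta|\leq\frac{\pi}{2}$ this is clear from $|M+\zeta|^{2}\geq M^{2}+|\zeta|^{2}$, and in the remaining range, writing $\phi=|\arg\zeta|-\frac{\pi}{2}$, one minimizes $M^{2}-2M|\zeta|\sin\phi+|\zeta|^{2}$ over $M\geq 0$ to obtain $|M+\zeta|^{2}\geq|\zeta|^{2}\cos^{2}\phi\geq|\zeta|^{2}\cos^{2}(\omega-\delta)$. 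This yields $\|(A_{1}(\varepsilon)-\zeta\1)^{-1}\|\leq M_{\delta}/|\zeta|$ with $M_{\delta}=1/\cos(\omega-\delta)$ independent of $\zeta$, while the same computation with real $\lambda>0$ gives $\|(\lambda\1-A_{1}(\varepsilon))^{-1}\|=1/\lambda$ (since $\inf_{\eta}M(\eta)=M(\varnothing)=0$), so the contraction property also follows directly from Hille--Yosida. As $\omega\in(0,\frac{\pi}{2})$ and $\delta>0$ are arbitrary, this gives $A_{1}(\varepsilon)\in\mathcal{H}(\omega)$ for all such $\omega$. The one step requiring genuine care is exactly this geometric bound, because it must be uniform in both $\eta$ and $\zeta$; everything else is routine once $A_{1}(\varepsilon)$ is seen as a multiplication operator.
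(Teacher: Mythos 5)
Your proof is correct: $A_{1}(\varepsilon)$ is indeed multiplication by $-\bigl(m|\eta|+\varepsilon\varkappa^{-}E^{a^{-}}(\eta)\bigr)$ on $L^{1}(\Gamma_{0},C^{|\cdot|}d\lambda)$, and your identification of $D_{1}$ as the maximal domain, the explicit semigroup $e^{-tM(\eta)}$, and the uniform sector estimate $|M+\zeta|\geq|\zeta|\cos(\omega-\delta)$ are all sound. The paper gives no argument of its own here --- it simply cites Proposition 4.2 of \cite{FKK2009} --- so your write-up supplies in full the standard multiplication-operator proof that the paper outsources to that reference.
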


\begin{proof}
See the proof of Proposition 4.2 in \cite{FKK2009}.
\end{proof}

\begin{remark}\label{rem001}
It is a simple matter to check that Proposition~\ref{pr1} holds also
in the case $\varepsilon=0$, provided the domain of the operator
$A_{1}(0):=A_{1}$ is changed to
$$
D_{0}:=\left\{ G\in \mathcal{L}_{C}\left|\right.\:
\;\left\vert \cdot \right\vert G\in \mathcal{L}_{C}\right\}\supset D_{1}.
$$
\end{remark}


The next task is to show that for any $\varepsilon > 0$ the operator
\begin{equation}
A_{2}(\varepsilon):= \hat{L}_{\varepsilon ,\mathrm{ren}}-A_{1}(\varepsilon)=A_{2}+\varepsilon B_{2}
\end{equation}
with the domain $D_{1}$ as well as  the operator $A_{2}(0):=A_{2}$ with the
domain $D_{0}$ are relatively bounded w.r.t. the operator
$(A_{1}(\varepsilon),\,D_{1})$ and $(A_{1},\,D_{0})$,
correspondingly. This is demonstrated in  Propositions~\ref{pr001}
and \ref{pr002}, which can be proved similarly to Lemmas 4.4 and 4.5
in \cite{FKK2009}.

\begin{proposition}\label{pr001}
For any $\delta >0$ and any $\,\varkappa^{-}, \varkappa^{+}, m, C >0$ such that
$$
\frac{\varkappa^{-}C}{m}+\frac{\varkappa^{+}}{m}\leq \delta
$$
the following estimate holds
\begin{equation*}
\left\Vert A_{2}G\right\Vert _{C}\leq \delta \left\Vert A_{1}G\right\Vert
_{C},~G\in D_{0}.
\end{equation*}%
Moreover, for all $\varepsilon >0$
\begin{equation*}
\left\Vert A_{2}G\right\Vert _{C}\leq \delta \left\Vert A_{1}(\varepsilon) G\right\Vert _{C},~G\in D_{1} .
\end{equation*}
\end{proposition}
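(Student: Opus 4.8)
The plan is to prove both inequalities directly at the level of the $\L_C=L^1(\Ga_0,C^{|\cdot|}d\la)$ norm, by estimating the two summands of $A_2$ separately and reducing each of them to the single quantity $\int_{\Ga_0}|\eta|\,|G(\eta)|\,C^{|\eta|}\,d\la(\eta)$, which is precisely $m^{-1}\lV A_1 G\rV_C$ since $(A_1G)(\eta)=-m|\eta|G(\eta)$. The only analytic tool needed is the Minlos-type coordinate change for the Lebesgue--Poisson measure,
\[
\int_{\Ga_0}\sum_{x\in\eta}H(x,\eta\setminus x)\,d\la(\eta)=\int_{\R^{d}}\int_{\Ga_0}H(x,\eta)\,d\la(\eta)\,dx,
\]
together with the normalizations $\int_{\R^d}a^{-}=\int_{\R^d}a^{+}=1$; the argument then parallels the relative-boundedness estimates of Lemmas~4.4 and 4.5 in \cite{FKK2009}.

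First I would bound $\lV A_2 G\rV_C$ by moving the absolute value inside the integral and the finite sums. For the competition term $-\varkappa^{-}\sum_{x\in\eta}\sum_{y\in\eta\setminus x}a^{-}(x-y)G(\eta\setminus x)$ I apply the coordinate change to integrate out the removed point $x$; because $\int_{\R^{d}}a^{-}(x-y)\,dx=1$, the inner $x$-integral collapses, and the weight passes from $C^{|\eta\setminus x|}$ to $C^{|\eta|}$ at the cost of one extra factor $C$, leaving the contribution $\varkappa^{-}C\int_{\Ga_0}|\eta|\,|G(\eta)|\,C^{|\eta|}\,d\la(\eta)$.

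The birth term $\varkappa^{+}\sum_{y\in\eta}\int_{\R^{d}}a^{+}(x-y)G((\eta\setminus y)\cup x)\,dx$ is the delicate part and the main technical obstacle, since it simultaneously deletes a point $y$ and inserts an integrated point $x$. Here I would apply the coordinate change \emph{twice}: first to integrate out the deleted point $y$, where $\int_{\R^{d}}a^{+}(x-y)\,dy=1$ removes the kernel; then, reindexing by the inserted point, to convert the remaining integral $\int_{\R^{d}}|G(\eta\cup x)|\,dx$ back into the sum $\sum_{x\in\eta}|G(\eta)|$, which reproduces the factor $|\eta|$. The careful bookkeeping of the $C^{|\eta|}$-weights (one factor of $C$ gained when deleting $y$, one lost when inserting $x$) is where the estimate has to be handled with care; the net contribution is $\varkappa^{+}\int_{\Ga_0}|\eta|\,|G(\eta)|\,C^{|\eta|}\,d\la(\eta)$. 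Adding the two contributions yields
\[
\lV A_2 G\rV_C\le(\varkappa^{-}C+\varkappa^{+})\int_{\Ga_0}|\eta|\,|G(\eta)|\,C^{|\eta|}\,d\la(\eta)=\frac{\varkappa^{-}C+\varkappa^{+}}{m}\,\lV A_1 G\rV_C\le\delta\,\lV A_1 G\rV_C,
\]
which is the first assertion, valid for all $G\in D_0$.

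For the second inequality I would simply observe that $A_1(\varepsilon)=A_1+\varepsilon B_1$ acts as the multiplication operator $G\mapsto-\bigl(m|\eta|+\varepsilon\varkappa^{-}E^{a^{-}}(\eta)\bigr)G(\eta)$, in which both $m|\eta|\ge0$ and $\varepsilon\varkappa^{-}E^{a^{-}}(\eta)\ge0$. Hence pointwise $|(A_1(\varepsilon)G)(\eta)|\ge m|\eta|\,|G(\eta)|=|(A_1G)(\eta)|$, so that $\lV A_1(\varepsilon)G\rV_C\ge\lV A_1 G\rV_C$ for every $G\in D_1\subset D_0$. Combining this monotonicity with the first estimate gives $\lV A_2 G\rV_C\le\delta\,\lV A_1 G\rV_C\le\delta\,\lV A_1(\varepsilon)G\rV_C$, completing the proof. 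Once the double coordinate change for the birth term is carried out correctly, everything else reduces to the elementary positivity of the multiplication operator $A_1(\varepsilon)$.
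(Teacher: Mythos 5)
Your proof is correct and follows exactly the argument the paper intends: the paper itself gives no details, deferring to Lemmas~4.4 and 4.5 of \cite{FKK2009}, and your Minlos-lemma bookkeeping (yielding the contributions $\varkappa^{-}C$ and $\varkappa^{+}$ times $\int_{\Gamma_0}|\eta|\,|G(\eta)|\,C^{|\eta|}\,d\lambda(\eta)=m^{-1}\lV A_1G\rV_C$) together with the pointwise monotonicity $|(A_1(\varepsilon)G)(\eta)|\ge|(A_1G)(\eta)|$ is precisely that standard relative-boundedness computation. No gaps.
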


\noindent Now, the operator $\left( A_{2}, D_{0}\right)$ is well-defined
on $\mathcal{L}_{C}$.

\begin{proposition} \label{pr002}
For any $\varepsilon ,\delta >0$ and any $ \varkappa ^{-},\varkappa ^{+}, m, C>0$
such that%
\begin{equation*}
\varepsilon \varkappa ^{+}E^{a^{+}}\left( \eta \right) <\delta C\left(
\varepsilon \varkappa ^{-}E^{a^{-}}\left( \eta \right) +m\left\vert \eta
\right\vert \right) ,~\eta \neq \emptyset
\end{equation*}%
the following estimate holds%
\begin{equation*}
\left\Vert \varepsilon B_{2}G\right\Vert _{C}\leq a\left\Vert
A_{1}(\varepsilon) G\right\Vert _{C},~G\in D_{1}
\end{equation*}%
with $a <\delta $.
\end{proposition}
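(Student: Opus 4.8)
The plan is to compute $\lV\eps B_2 G\rV_C$ directly and, via the standard Minlos--Mecke identity for the Lebesgue--Poisson measure, reduce it to a pointwise-weighted integral of $|G|$ against the energy functional $E^{a^+}(\eta):=\sum_{x\in\eta}\sum_{y\in\eta\setminus x}a^+(x-y)$; the pointwise hypothesis then turns this integral into a bound by $\delta\,\lV A_1(\eps)G\rV_C$. First I would write
\[
\lV\eps B_2 G\rV_C=\eps\varkappa^+\int_{\Ga_0}\lv\sum_{y\in\eta}\int_\X a^+(x-y)G(\eta\cup x)\,dx\rv C^{\n}\,d\la(\eta),
\]
and, since $a^+\ge 0$, bring the modulus inside the sum and the integral to obtain
\[
\lV\eps B_2 G\rV_C\le \eps\varkappa^+\int_{\Ga_0}\sum_{y\in\eta}\int_\X a^+(x-y)\lv G(\eta\cup x)\rv\,dx\,C^{\n}\,d\la(\eta).
\]

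The key step is to apply the identity $\int_{\Ga_0}\int_\X F(\eta\cup x,x)\,dx\,d\la(\eta)=\int_{\Ga_0}\sum_{x\in\eta}F(\eta,x)\,d\la(\eta)$ with $F(\eta,x):=\bigl(\sum_{y\in\eta\setminus x}a^+(x-y)\bigr)\lv G(\eta)\rv C^{\n-1}$, which turns the integrated point $x$ back into a summation point of the configuration and assembles the double sum $E^{a^+}$. This produces
\[
\lV\eps B_2 G\rV_C\le\int_{\Ga_0}\frac{\eps\varkappa^+E^{a^+}(\eta)}{C}\,\lv G(\eta)\rv\,C^{\n}\,d\la(\eta).
\]
Since $G\in D_1$ ensures both $E^{a^-}(\cdot)G\in\L_C$ and $|\cdot|G\in\L_C$, all integrals appearing here are finite and the interchange of summation and integration in the identity is legitimate; this is the one place where integrability must genuinely be checked.

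To finish, I recall that $(A_1(\eps)G)(\eta)=-\bigl(m\n+\eps\varkappa^-E^{a^-}(\eta)\bigr)G(\eta)$, so the integrand of $\lV A_1(\eps)G\rV_C$ is exactly $\bigl(m\n+\eps\varkappa^-E^{a^-}(\eta)\bigr)\lv G(\eta)\rv C^{\n}$. The hypothesis $\eps\varkappa^+E^{a^+}(\eta)<\delta C\bigl(m\n+\eps\varkappa^-E^{a^-}(\eta)\bigr)$ for $\eta\neq\emptyset$ (the case $\eta=\emptyset$ being trivial, as $E^{a^+}(\emptyset)=0$) says precisely that the ratio $R(\eta):=\dfrac{\eps\varkappa^+E^{a^+}(\eta)}{C\bigl(m\n+\eps\varkappa^-E^{a^-}(\eta)\bigr)}$ satisfies $R(\eta)<\delta$. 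Setting $a:=\esssup_{\eta\neq\emptyset}R(\eta)$ and estimating the last integral by $a$ times the integrand of $\lV A_1(\eps)G\rV_C$ yields $\lV\eps B_2 G\rV_C\le a\,\lV A_1(\eps)G\rV_C$.

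The main obstacle is the final assertion $a<\delta$: passing from the strict pointwise bound $R(\eta)<\delta$ to a single constant $a$ strictly below $\delta$ and uniform in $\eta$. I expect this to be the delicate point, handled as in Lemma 4.5 of \cite{FKK2009}; it is precisely this strictness that makes $A_2(\eps)=A_2+\eps B_2$ a relatively bounded perturbation of $A_1(\eps)$ with relative bound below $\delta$, which is what the subsequent perturbation argument (Assumption 2(c) of \textbf{(A)}) requires. Everything else is the routine Minlos-identity bookkeeping carried out above.
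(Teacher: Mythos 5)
Your computation via the Mecke identity for the Lebesgue--Poisson measure, yielding $\lV\eps B_2G\rV_C\le\int_{\Ga_0}C^{-1}\eps\varkappa^+E^{a^+}(\eta)\lv G(\eta)\rv C^{\n}\,d\la(\eta)$ and then comparing integrands with $\lV A_1(\eps)G\rV_C$, is exactly the intended argument: the paper gives no proof of its own here, deferring to Lemma 4.5 of \cite{FKK2009}, which proceeds in precisely this way. The one loose end you flag --- that the strict pointwise bound $R(\eta)<\delta$ only gives $a\le\delta$ rather than $a<\delta$ --- is a genuine imprecision, but it is inherited from the statement itself (and from the cited source), and it is harmless downstream, since the application in Proposition~\ref{2(cc)} only needs the relative bound $\le\delta=\tfrac14$.
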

\begin{remark}
Proposition~\ref{pr002} enables us to take $D(B_{2})=D_{1}$. As a
result, Remark~\ref{rem001} shows that the domain of the operator
$A_{2}(\varepsilon)$ will be $D_{0}\cap D_{1}=D_{1}$.
\end{remark}
We are now in a position to show that the operator $(
\hat{L}_{\varepsilon ,\mathrm{ren}}, \,D_{1} )$ generates semigroup
on $\mathcal{L}_{C}$. To this end we use the classical result about
the perturbation of holomorphic semigroups (see, e.g.
\cite{Kat1976}). For the convenience of the reader we formulate
below the main statement without proof:

 {\it For any $T\in\mathcal{H}(\omega), \;\omega\in(0;\,\frac{\pi}{2})$ and for any
$\epsilon > 0$ there exist positive constants $\alpha$, $\delta$
such that if the operator $A$ satisfies
\begin{equation*}
||Au||\leq a||Tu||+b||u||, \quad u\in D(T)\subset D(A),
\end{equation*}
with $a<\delta$, $b<\delta$, then
$T+A$ is a generator of a holomorphic semigroup.
In particular, if $\;b=0$, then $T+A\in
\mathcal{H}(\omega -\epsilon)$.
}

\begin{theorem}\label{theor11}
Let the functions $a^{-},a^{+}$ and the constants $m,\, \varkappa
^{-},\varkappa ^{+}, C>0$ satisfy
\begin{align}
m&>4\left( \varkappa ^{-}C+\varkappa ^{+}\right), \label{bigmort}\\
C\varkappa ^{-}a^{-}\left( x\right) &\geq 4\varkappa ^{+}a^{+}\left(
x\right) ,~x\in \mathbb{R}^{d}.\label{bigcomp}
\end{align}
Then, for any $\varepsilon > 0$
the operator $( \hat{L}_{\varepsilon ,\mathrm{ren}}, \,D_{1} ) $ is
a generator of a holomorphic semigroup $\hat{U}_{t,\varepsilon },\,
t\geq 0$ on $\mathcal{L}_{C}$.
\end{theorem}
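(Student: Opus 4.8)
The plan is to apply the perturbation result for holomorphic semigroups (stated just above the theorem) to the decomposition $\hat{L}_{\varepsilon,\mathrm{ren}} = A_1(\varepsilon) + A_2(\varepsilon)$, where $A_1(\varepsilon) = A_1 + \varepsilon B_1$ and $A_2(\varepsilon) = A_2 + \varepsilon B_2$. By Proposition~\ref{pr1}, the operator $(A_1(\varepsilon), D_1)$ already lies in $\mathcal{H}(\omega)$ for every $\omega \in (0;\frac{\pi}{2})$, so it generates a holomorphic contraction semigroup. It therefore plays the role of the unperturbed generator $T$, and the task reduces to verifying that the perturbation $A_2(\varepsilon)$ is relatively bounded with respect to $A_1(\varepsilon)$ with a relative bound strictly less than the threshold $\delta$ furnished by the perturbation theorem.

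First I would split the relative bound estimate into its two pieces using the triangle inequality:
\begin{equation*}
\lV A_2(\varepsilon) G \rV_C \leq \lV A_2 G \rV_C + \lV \varepsilon B_2 G \rV_C, \quad G \in D_1.
\end{equation*}
The first term is controlled by Proposition~\ref{pr001} and the second by Proposition~\ref{pr002}, so the whole plan hinges on checking that the structural hypotheses \eqref{bigmort} and \eqref{bigcomp} are strong enough to make both propositions applicable with small enough constants. From \eqref{bigmort} we have $m > 4(\varkappa^- C + \varkappa^+)$, which gives $\frac{\varkappa^- C}{m} + \frac{\varkappa^+}{m} < \frac14$; feeding $\delta = \frac14$ into Proposition~\ref{pr001} yields $\lV A_2 G \rV_C \leq \frac14 \lV A_1(\varepsilon) G \rV_C$. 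For the second piece, condition \eqref{bigcomp} ensures $C\varkappa^- a^-(x) \geq 4\varkappa^+ a^+(x)$ pointwise; integrating this against the configuration (i.e.\ comparing $E^{a^+}(\eta)$ with $C E^{a^-}(\eta)$) should verify the hypothesis of Proposition~\ref{pr002} with $\delta = \frac14$, giving $\lV \varepsilon B_2 G \rV_C \leq a \lV A_1(\varepsilon) G \rV_C$ with $a < \frac14$. Combining the two bounds produces a total relative bound strictly less than $\frac12 < 1$, uniformly in $\varepsilon > 0$, and in particular smaller than the perturbation threshold $\delta$ from the quoted theorem.

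Having established $\lV A_2(\varepsilon) G \rV_C \leq a \lV A_1(\varepsilon) G \rV_C + b \lV G \rV_C$ with $a$ small and $b = 0$ (the estimates above are purely relative, with no lower-order term), the perturbation theorem immediately concludes that $A_1(\varepsilon) + A_2(\varepsilon) = \hat{L}_{\varepsilon,\mathrm{ren}}$ generates a holomorphic semigroup $\hat{U}_{t,\varepsilon}$ on $\mathcal{L}_C$, as desired. The main subtlety I anticipate is the bookkeeping in reconciling the separate smallness constants $\delta$ appearing in Propositions~\ref{pr001} and~\ref{pr002}: one must make sure the single pair of hypotheses \eqref{bigmort}--\eqref{bigcomp} with the common factor $4$ simultaneously forces both relative bounds below a common threshold whose sum stays under the constant $\delta$ required by Kato's perturbation statement. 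This is why the factor $4$ (rather than, say, $2$) appears in both conditions—it leaves enough room that $a + \frac14 < 1$ with margin to spare. Once that arithmetic is pinned down, the holomorphy and the uniformity in $\varepsilon$ follow with no further work.
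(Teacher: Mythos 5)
Your proposal is correct and follows essentially the same route as the paper: decompose $\hat{L}_{\varepsilon,\mathrm{ren}}=A_1(\varepsilon)+A_2(\varepsilon)$, invoke Proposition~3.3 to place $A_1(\varepsilon)$ in $\mathcal{H}(\omega)$, and use Propositions~3.5 and~3.6 with $\delta=\tfrac14$ each (enabled by the factor $4$ in \eqref{bigmort}--\eqref{bigcomp}) to get a relative bound below $\tfrac12$, which is exactly the threshold the paper notes is admissible in Kato's perturbation theorem. The only cosmetic slip is your closing remark comparing the bound to $1$ rather than to the actual threshold $\tfrac12$, but your earlier computation already establishes the sharper bound, so nothing is missing.
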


\begin{proof}
Let  $\varepsilon > 0$ be arbitrary and fixed. By definition,
$$\hat{L}_{\varepsilon
,\mathrm{ren}}=A_{1}(\varepsilon)+A_{2}(\varepsilon).$$ The direct
application of the theorem about perturbation of holomorphic semigroups
(see the formulation above the assertion of Theorem~\ref{theor11})
to $T= A_{1}(\varepsilon)$ and
$A=A_{2}(\varepsilon)$ gives now the desired claim. It is important
to note that Proposition~\ref{pr1} enables us to
consider $\delta$ equal to $\frac{1}{2}$ in the formulation of the classical theorem introduced above.
The appearance of the multiplicand $4$ on the left-hand side of the
both assumptions in assertion of Theorem~\ref{theor11} is motivated
exactly by the latter fact.
\end{proof}

\begin{theorem}\label{co1}
Assume that the constants $\,m, \varkappa ^{-}, \varkappa ^{+}, C>0$
satisfy $$m>2\left( \varkappa ^{-}C+\varkappa ^{+}\right).$$ Then, the
operator $\hat{V}=A_{1}+A_{2}$ with the domain $D_{0}$ is
a generator of a holomorphic semigroup $\hat{U}_{t}^{V}, \,t\geq 0$ on $\mathcal{L}_{C}$.
\end{theorem}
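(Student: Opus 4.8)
The plan is to obtain $\hat{V} = A_1 + A_2$ as the generator of a holomorphic semigroup by the same perturbative route used for Theorem~\ref{theor11}, now exploiting the fact that at $\varepsilon = 0$ the perturbation $\varepsilon B_2$ disappears, so that only a single relative bound is needed. First I would record, via Remark~\ref{rem001}, that Proposition~\ref{pr1} applies in the case $\varepsilon = 0$: the operator $(A_1, D_0)$ generates a contraction $C_0$-semigroup and, more importantly, $A_1 \in \mathcal{H}(\omega)$ for every $\omega \in (0, \frac{\pi}{2})$. This supplies the unperturbed holomorphic generator $T = A_1$ to which the perturbation theorem quoted just before Theorem~\ref{theor11} will be applied.

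Next I would verify the relative boundedness hypothesis with vanishing additive term. By the first estimate of Proposition~\ref{pr001}, for every $G \in D_0$ one has $\|A_2 G\|_C \leq \delta \|A_1 G\|_C$ provided $\frac{\varkappa^- C}{m} + \frac{\varkappa^+}{m} \leq \delta$. Under the standing hypothesis $m > 2(\varkappa^- C + \varkappa^+)$ we have $\frac{\varkappa^- C + \varkappa^+}{m} < \frac{1}{2}$, so this bound holds with some $\delta < \frac{1}{2}$ and with no additive $\|G\|_C$-term, i.e.\ $b = 0$. Since $(A_2, D_0)$ is well-defined on $\mathcal{L}_C$ (as noted after Proposition~\ref{pr001}), the domain inclusion $D_0 = D(A_1) \subset D(A_2)$ required by the perturbation theorem is in place.

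I would then invoke the perturbation theorem for holomorphic semigroups with $T = A_1$ and $A = A_2$. Since $A_1 \in \mathcal{H}(\omega)$, Proposition~\ref{pr1} permits, exactly as in the proof of Theorem~\ref{theor11}, taking the threshold constant in that theorem to be $\frac{1}{2}$; as the relative bound above holds with $a = \delta < \frac{1}{2}$ and $b = 0$, the hypotheses $a < \frac{1}{2}$, $b < \frac{1}{2}$ are met. Therefore $\hat{V} = A_1 + A_2$ with domain $D_0$ generates a holomorphic semigroup $\hat{U}_t^V$ on $\mathcal{L}_C$, and in fact $\hat{V} \in \mathcal{H}(\omega - \epsilon)$ for a suitable $\epsilon$ because $b = 0$.

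The only point requiring care — and the reason the threshold improves from the factor $4$ in Theorem~\ref{theor11} to the factor $2$ here — is that in the present $\varepsilon = 0$ setting there is a single perturbing operator $A_2$ rather than the pair $A_2 + \varepsilon B_2$. Consequently one need not split the admissible budget $\frac{1}{2}$ between Proposition~\ref{pr001} and Proposition~\ref{pr002}, but may spend all of it on $A_2$ alone, which is precisely what relaxes the mortality condition to $m > 2(\varkappa^- C + \varkappa^+)$. I expect no substantive obstacle beyond checking that the relative bound is strict, which the strict inequality in the hypothesis guarantees, and that the stated domain inclusion holds.
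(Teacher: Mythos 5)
Your proposal is correct and follows essentially the same route as the paper: the authors likewise invoke the perturbation theorem for holomorphic semigroups with $T=A_{1}$ (a holomorphic generator by Proposition~\ref{pr1} together with Remark~\ref{rem001}) and $A=A_{2}$, which by Proposition~\ref{pr001} is relatively bounded with bound $a=\frac{\varkappa^{-}C+\varkappa^{+}}{m}<\frac{1}{2}$ and $b=0$ under the hypothesis $m>2(\varkappa^{-}C+\varkappa^{+})$. Your additional explanation of why the constant improves from $4$ to $2$ (no budget need be reserved for $\varepsilon B_{2}$) is consistent with the paper's reasoning and fills in what its one-line proof leaves implicit.
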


\begin{proof}
We use the same classical result as for Theorem~\ref{theor11} in the
case: $A_{1}$ is a generator of holomorphic semigroup, $A_{2}$ is
relatively bounded w.r.t. $A_{1}$ with the boundary less then
$\frac{1}{2}$.
\end{proof}

Now we may repeat the same considerations
as at the end of Section~\ref{sect:base}.
Namely, transferring the general results
about adjoint semigroups (see, e.g., \cite{EN2000}) onto semigroup
$(\hat{U}_{t}^{V})^\ast$ in $\K_C$ we deduce that it will be weak*-continuous and
weak*-differentiable at $0$. Moreover, $\hat{V}^\ast$ will be the
weak*-generator of $\hT^\ast(t)$. This means,
in particular, that for any $G\in D(\hat{V})\subset \L_C$, $k\in D(\hat{V}^*)\subset \K_C$
\begin{equation}\label{abstrCauchy}
\frac{d}{dt}\llu G, (\hat{U}_{t}^{V})^\ast
k \rru = \llu G, \hat{V}^\ast(\hat{U}_{t}^{V})^\ast
k\rru.
\end{equation}
The explicit form of $\hat{V}^\ast$ follows
from \eqref{dual-descent}, namely, for any
$k \in D(\hat{V}^*)$
\begin{align}
\hat{V}^\ast k(\eta)=-m|\eta|k(\eta)&-\varkappa^{-}\int_{\R^{d}}\sum_{x\in\eta}a^{-}(x-y)k(\eta\cup y)dy\nonumber\\
&+ \varkappa^{+}\sum_{x\in\eta}\int_{\R^{d}}a^{+}(x-y)k(\eta\setminus x\cup y)dy.\label{vadjoint}
\end{align}
As a result, we have that for any $k_0\in
D(\hat{V}^*)$ the function $k_t=(\hat{U}_{t}^{V})^\ast k_0$ provides
a weak* solution of the following Cauchy problem
\begin{equation}\label{CauchyVlasov}
\begin{cases}
\dfrac{\partial}{\partial t} k_t = \hat{V}^\ast k_t\\[2mm]
k_t\bigr|_{t=0}=k_0.
\end{cases}
\end{equation}

In the next theorem we show that the limiting Vlasov dynamics has chaos preservation property, i.e. preserves the Lebesgue--Poisson
exponents.

\begin{theorem}\label{Vlasovscheme}
Let conditions of Theorem \ref{theor11} be satisfied and, additionally, $C\geq\frac{4}{16 e-1}$. Let $\rho_0\geq 0$ be a measurable
nonnegative function on $\X$ such that $\esssup_{x\in\X} \rho_0(x) \leq C$. Then the Cauchy problem
\eqref{CauchyVlasov} with $k_0=e_\la(\rho_0)$
has a weak* solution $k_{t}=e_\la(\rho_t)\in\K_C$, where $\rho_t$ is a unique nonnegative solution to the
Cauchy problem
\begin{equation}\label{CauchyVlasoveqn}
\begin{cases}
\dfrac{\partial}{\partial t} \rho_t(x) = \varkappa^{+}(a^{+}\ast\rho_t)(x)- \varkappa^{-}\rho_{t}(x)(a^{-}\ast \rho_{t})(x)-
m\rho_{t}(x),\\[2mm]
\rho_t \bigr|_{t=0}(x)=\rho_0(x),
\end{cases}
\end{equation}
and $\esssup_{x\in\X} \rho_{t}(x) \leq C$, $t\geq0$.
\end{theorem}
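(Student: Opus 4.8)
The plan is to decouple the statement into two problems: the scalar Vlasov equation \eqref{CauchyVlasoveqn} for $\rho_t$ in $L^\infty(\X)$, and the ``chaos preservation'' fact that the Lebesgue--Poisson exponential $e_\la(\rho_t)$ then solves \eqref{CauchyVlasov} in the weak* sense. First I would treat \eqref{CauchyVlasoveqn} as an ODE $\dot\rho_t=\upsilon(\rho_t)$ in the Banach space $L^\infty(\X)$, where $\upsilon(\rho)=\varkappa^{+}a^{+}\ast\rho-\varkappa^{-}\rho\,(a^{-}\ast\rho)-m\rho$. Since $\|a^{\pm}\ast\rho\|_\infty\le\|\rho\|_\infty$ and the quadratic term is locally Lipschitz on bounded sets of $L^\infty(\X)$, the map $\upsilon$ is locally Lipschitz, so Picard--Lindel\"{o}f in $L^\infty(\X)$ yields a unique local mild solution; nonnegativity and the a priori bound will then upgrade this to a unique global one.

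The key of this first step is the a priori estimate $0\le\rho_t\le C$. I would rewrite \eqref{CauchyVlasoveqn} as $\dot\rho_t=-\big(m+\varkappa^{-}(a^{-}\ast\rho_t)\big)\rho_t+\varkappa^{+}(a^{+}\ast\rho_t)$ and pass to the Duhamel form $\rho_t=\rho_0\,e^{-\int_0^t V_s\,ds}+\int_0^t e^{-\int_s^t V_u\,du}\,\varkappa^{+}(a^{+}\ast\rho_s)\,ds$ with $V_s:=m+\varkappa^{-}(a^{-}\ast\rho_s)\ge m>0$. Since $\rho_0\ge0$ and the source is nonnegative, this representation gives $\rho_t\ge0$ at once. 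For the upper bound, on any interval where $\rho_s\le C$ one has $a^{+}\ast\rho_s\le C$, so $\rho_t\le Ce^{-mt}+\varkappa^{+}C\,\frac{1-e^{-mt}}{m}=C\big(e^{-mt}+\tfrac{\varkappa^{+}}{m}(1-e^{-mt})\big)$; because $m>\varkappa^{+}$ (a consequence of \eqref{bigmort}) the bracket is $<1$, so the bound $\rho_t\le C$ is propagated, the solution is global, and $\esssup_{x}\rho_t(x)\le C$ for all $t\ge0$.

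Next I would establish the chaos-preservation identity. Using the explicit form \eqref{vadjoint} of $\hat V^\ast$ together with $e_\la(\rho,\eta\cup y)=\rho(y)e_\la(\rho,\eta)$ and $e_\la(\rho,\eta\setminus x\cup y)=\rho(y)e_\la(\rho,\eta\setminus x)$, a direct computation gives $(\hat V^\ast e_\la(\rho))(\eta)=\sum_{x\in\eta}\upsilon(\rho)(x)\,e_\la(\rho,\eta\setminus x)$. On the other hand $\partial_t e_\la(\rho_t,\eta)=\sum_{x\in\eta}\dot\rho_t(x)\,e_\la(\rho_t,\eta\setminus x)$, so whenever $\rho_t$ solves \eqref{CauchyVlasoveqn} the two coincide pointwise on $\Ga_0$. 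The bound $\rho_t\le C$ gives $\|e_\la(\rho_t)\|_{\K_C}\le1$, hence $e_\la(\rho_t)\in\K_C$, while $\rho_t\ge0$ keeps it a genuine sub-Poissonian correlation function.

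Finally I would make the weak* statement rigorous. For $G\in D(\hat V)=D_0$ I would differentiate $\llu G,e_\la(\rho_t)\rru=\int_{\Ga_0}G\,e_\la(\rho_t)\,d\la$ under the integral sign; the uniform bound $|\dot\rho_t(x)|\le\varkappa^{+}C+\varkappa^{-}C^2+mC$ furnishes the dominating function $|G(\eta)|\,(\mathrm{const})\,|\eta|\,C^{|\eta|}$, which is $\la$-integrable precisely because $|\cdot|G\in\L_C$ for $G\in D_0$. Combined with the identity above and the Lebesgue--Poisson (Minlos) integration-by-parts realizing $\int_{\Ga_0}G\,\hat V^\ast k\,d\la=\int_{\Ga_0}\hat V G\,k\,d\la$, this yields $\frac{d}{dt}\llu G,e_\la(\rho_t)\rru=\llu\hat V G,e_\la(\rho_t)\rru$ for all $G\in D_0$, i.e. $e_\la(\rho_t)$ is a weak* solution; weak*-continuity at $t=0$ follows from $\rho_t\to\rho_0$ and dominated convergence, and running the backward semigroup $\hat U^V_{T-s}$ on the test functions identifies it with $(\hat U^V_t)^\ast e_\la(\rho_0)$. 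The main obstacle is this last, purely analytic step: guaranteeing that all pairings against $\hat V^\ast e_\la(\rho_t)$ — in particular the unbounded factor $m|\eta|$ coming from the first term of \eqref{vadjoint} — stay finite and continuous in $t$, and it is here that the quantitative hypothesis $C\ge\frac{4}{16e-1}$ is invoked to control the relevant integrability and domain estimates.
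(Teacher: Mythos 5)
Your proposal is correct in substance and reaches the same two conclusions as the paper (global well-posedness of \eqref{CauchyVlasoveqn} with $0\le\rho_t\le C$, and the chaos-preservation identity $(\hat V^\ast e_\la(\rho))(\eta)=\sum_{x\in\eta}\upsilon(\rho)(x)e_\la(\rho,\eta\setminus x)$, which is exactly how the paper passes from \eqref{CauchyVlasoveqn} to \eqref{CauchyVlasov}), but the well-posedness part is argued by a genuinely different route. The paper does not invoke Picard--Lindel\"{o}f for a locally Lipschitz vector field; it constructs the solution as the fixed point of the explicit Duhamel map $\Phi$ of \eqref{defPhi} on the cone $X_T^+\subset C([0,T],L^\infty(\X))$, proving that $\Phi$ is a contraction there with constant $\frac{\varkappa^+}{m}+\bigl(1+\frac{C}{4}\bigr)\frac{\varkappa^-}{em}$, estimated via \eqref{bigcomp} and the elementary inequalities $|e^{-a}-e^{-b}|\le|a-b|$, $xe^{-x}\le e^{-1}$. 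Working inside the cone makes nonnegativity automatic, and since the contraction constant is independent of $T$ the solution is continued over $[T,2T],[2T,3T],\dots$ with the same bound. Your route is arguably more economical --- local Lipschitz continuity of $\upsilon$ on balls of $L^\infty(\X)$ plus the a priori bound gives global existence and uniqueness among \emph{all} solutions, not only nonnegative ones --- and your upper bound $\rho_t\le C\bigl(e^{-mt}+\frac{\varkappa^+}{m}(1-e^{-mt})\bigr)$ is sharper than the paper's crude comparison $r_t\le Ce^{-(m-\varkappa^+)t}e^{2\varkappa^+t}$.

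Two points need repair. First, your nonnegativity argument is circular as stated: the claims $V_s\ge m$ and ``the source $\varkappa^+(a^+\ast\rho_s)$ is nonnegative'' both already presuppose $\rho_s\ge0$. You need either a bootstrap (consider the first time at which nonnegativity could fail) or, as the paper does, to set up the fixed-point iteration inside the cone $X_T^+$ from the start. Second, your closing sentence misattributes the hypothesis $C\ge\frac{4}{16e-1}$: it has nothing to do with integrability of the pairings against $\hat V^\ast e_\la(\rho_t)$, which are controlled by $\|e_\la(\rho_t)\|_{\K_C}\le1$ and $|\cdot|\,G\in\L_C$ for $G\in D_0$ alone. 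In the paper that hypothesis enters only to guarantee $1+\frac{C}{4}\le 4Ce$, so that the contraction constant of $\Phi$ is dominated by $\frac{4(\varkappa^++C\varkappa^-)}{m}<1$ via \eqref{bigmort}; in your Picard--Lindel\"{o}f route it is simply not needed.
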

\begin{proof}

 First of all, if
\eqref{CauchyVlasoveqn} has a solution $\rho_t(x)\geq0$ then
\[
\frac{\partial}{\partial t} \rho_t(x) \leq\varkappa^{+}(a^{+}\ast\rho_t)(x) -m\rho_t(x)\]
and, therefore, $\rho_t(x)\leq r_t(x)$ where $r_t(x)$ is a solution of
the Cauchy problem
\begin{equation*}\label{CauchyEst}
\begin{cases}
\dfrac{\partial}{\partial t} r_t(x) = \varkappa^{+}(a^{+}\ast
r_t)(x)-mr_t(x)  , \\
r_t \bigr|_{t=0}(x)=\rho_0(x)\geq 0,
\end{cases}
\end{equation*}
for a.a. $x\in\X$. Hence,
\begin{align*}
r_t(x)&=e^{-(m-\varkappa^+)t}e^{\varkappa^+ tL_{a^{+}}}\rho_0(x),\\\intertext{where} (L_{a^+}f)(x)&:=\int_\X a^+(x-y)[f(y)-f(x)]dy.
\end{align*}
Since for $f\in L^\infty(\X)$ we have
$\bigl| (L_{a^+}f)(x)|\leq2\|f\|_{L^\infty(\X)}$
then, by \eqref{bigmort},
\[
r_t(x)\leq Ce^{-(m-\varkappa^+)t}e^{2\varkappa^+ t}\leq C,
\]
that yields $0\leq\rho_t(x)\leq C$.

To prove the existence and uniqueness of the solution of
\eqref{CauchyVlasoveqn}
let us fix some $T>0$
and define the Banach space $X_T=C([0;T],L^\infty(\X))$
of all continuous functions on $[0;T]$ with
values in $L^\infty(\X)$; the norm on $X_T$ is given by
\mbox{$\|u\|_T:=\max\limits_{t\in[0;T]}\|u_t\|_{L^\infty(\X)}$}.
We denote by $X_T^+$ the cone of all
nonnegative
functions
from $X_T$.

Let $\Phi$ be a  mapping which
assign to any $v\in X_T$ the solution
$u_t$
of the linear Cauchy problem
\begin{equation}\label{CauchyLin}
\begin{cases}
\dfrac{\partial}{\partial t} u_t(x) =  \varkappa^{+}(a^{+}\ast v_t)(x)- \varkappa^{-}u_{t}(x)(a^{-}\ast v_{t})(x)-
mu_{t}(x), \\
u_t \bigr|_{t=0}(x)=\rho_0(x),
\end{cases}
\end{equation}
for a.a. $x\in\X$. Therefore,
\begin{align}\label{defPhi}
(\Phi v)_t(x)=&\exp\left\{-\int_0^t\bigl(
m+\varkappa^-(a^-\ast v_s)(x)
 \bigr)ds\right\}\rho_0(x)\\&+\int_0^t \exp\left\{-\int_s^t\bigl(
m+\varkappa^-(a^-\ast v_\tau)(x)
 \bigr)d\tau\right\}\varkappa^{+}(a^{+}\ast v_s)(x)ds.\nonumber
\end{align}
We have that $v\in X_T^+$ implies $\Phi v \geq0$ as well as the estimate
\[
(\Phi v)_t(x)\leq \rho_0(x)+\varkappa^+ \|v\|_T\int_0^t e^{-(t-s)m}ds\leq C +\frac{\varkappa^+}{m} \|v\|_T,
\]
where we use the trivial inequality
\begin{equation}\label{H}
\|f\ast g\|_{L^\infty(\X)}\leq\|f\|_{L^1(\X)}\|g\|_{L^\infty(\X)},
\qquad f\in L^1(\X), \  g\in L^\infty (\X).
\end{equation}
Therefore, $\Phi v\in
X_T^+$. For simplicity of notations we denote
for  $v\in X_T^+$ \[
(Bv)(t,x)=m+\varkappa^-(a^-\ast v_t)(x)\geq m>0.
\]
Then, for any $v, w\in X_T^+$
\begin{align*}
&\bigl| (\Phi v)_t(x)-(\Phi w)_t(x) \bigr| \\
\leq & \left|\exp\left\{-\int_0^t (Bv)(s,x)ds\right\}
-\exp\left\{-\int_0^t (Bw)(s,x)ds\right\}\right|\rho_0(x)\\
&+\int_0^t \left|\exp\left\{-\int_s^t(Bv)(\tau,x)d\tau\right\}\varkappa^{+}(a^{+}\ast v_s)(x)\right. \\ &\qquad  \left.-\exp\left\{-\int_s^t(Bw)(\tau,x)d\tau\right\}\varkappa^{+}(a^{+}\ast w_s)(x)\right|ds.
\end{align*}
We have
\begin{align*}
&\left|\exp\left\{-\int_0^t (Bv)(s,x)ds\right\}
-\exp\left\{-\int_0^t (Bw)(s,x)ds\right\}\right|\\\leq&e^{-mt}\left|\exp\left\{-\int_0^t \varkappa^-(a^-\ast v_s)(x)ds\right\}
-\exp\left\{-\int_0^t \varkappa^-(a^-\ast w_s)(x)ds\right\}\right|\\\leq&
e^{-mt}\left|\int_0^t \varkappa^-(a^-\ast v_s)(x)ds-\int_0^t \varkappa^-(a^-\ast w_s)(x)ds\right|\\\leq&e^{-mt}
\varkappa^-\|v-w\|_T\cdot t\leq \frac{\varkappa^-}{em}\|v-w\|_T,
\end{align*}
where we used \eqref{H} and obvious inequalities $|e^{-a}-e^{-b}|\leq |a-b|$ for $a,b\geq0$;  $e^{-x}x\leq
e^{-1}$ for $x\geq0$.

Next, using another simple estimates for
any $a,b,p,q\geq0$
\[
|pe^{-a}-qe^{-b}|\leq e^{-a}|p-q|+qe^{-b}|e^{-(a-b)}-1|\leq
e^{-a}|p-q|+qe^{-b}|a-b|,
\]
we obtain
\begin{align*}
&\int_0^t \left|\exp\left\{-\int_s^t(Bv)(\tau,x)d\tau\right\}\varkappa^{+}(a^{+}\ast v_s)(x)\right. \\ &\qquad  \left.-\exp\left\{-\int_s^t(Bw)(\tau,x)d\tau\right\}\varkappa^{+}(a^{+}\ast w_s)(x)\right|ds\\
\leq&
\varkappa^+ \int_0^t \exp\left\{-\int_s^t(Bv)(\tau,x)d\tau\right\}\bigl|
a^+*(v_s-w_s)\bigr|(x) ds\\&
+\int_0^t \exp\left\{-\int_s^t(Bw)(\tau,x)d\tau\right\}(\varkappa^{+}a^{+}\ast w_s)(x)\\ &\quad\times\left| \int_s^t(Bv)(\tau,x)d\tau-\int_s^t(Bw)(\tau,x)d\tau\right|ds\\\leq&
\varkappa^+ \|v-w\|_T \int_0^t e^{-m(t-s)}ds\\&
+\int_0^t \exp\left\{-\int_s^t\varkappa^-(a^-\ast w_\tau)(x)d\tau\right\}(\varkappa^{+}a^{+}\ast w_s)(x)\allowdisplaybreaks[0]\\ &\quad\times e^{-m(t-s)} \int_s^t \varkappa^-(a^-\ast |v_\tau-w_{\tau}|)(x)d\tau ds
\\\intertext{and, using \eqref{bigcomp} and
the inequalities above, one can continue}\leq&
\frac{\varkappa^+}{m}\|v-w\|_T+\frac{C}{4}\frac{\varkappa^-}{em}\|v-w\|_T\\&\quad\times\int_0^t \exp\left\{-\int_s^t\varkappa^-(a^-\ast w_\tau)(x)d\tau\right\}\varkappa^-(a^{-}\ast w_s)(x)ds\\=&
\frac{\varkappa^+}{m}\|v-w\|_T+\frac{C}{4}\frac{\varkappa^-}{em}\|v-w\|_T\allowdisplaybreaks[0]\\&\quad\times\int_0^t \frac{\partial}{\partial s} \exp\left\{-\int_s^t\varkappa^-(a^-\ast w_\tau)(x)d\tau\right\}ds\\\leq&\left( \frac{\varkappa^+}{m}+\frac{C}{4}\frac{\varkappa^-}{em}\right)\|v-w\|_T.
\end{align*}

Therefore, for $v,w\in X_T^+$
\[
\|\Phi v-\Phi w\|_T\leq
\left( \frac{\varkappa^+}{m}+\Bigl(1+\frac{C}{4}\Bigr)\frac{\varkappa^-}{em}\right)\|v-w\|_T\leq \frac{4(\varkappa^++C\varkappa^-)}{m}\|v-w\|_T,
\]
if, e.g., $1+\frac{C}{4}\leq 4Ce$, that means $C\geq\frac{4}{16
e-1}$.

As a result, by
\eqref{bigmort}, $\Phi$
is a contraction mapping on the cone $X_T^+$. Taking, as usual,
$v^{(n)}=\Phi^nv^{(0)}$, $n\geq1$ for $v^{(0)}\in X_T^+$ we obtain
that $\{v^{(n)}\}\subset X_T^+$ is a fundamental sequence in $X_T$
which has, therefore, a unique limit point $v\in X_T$. Since $X_T^+$
is a closed cone we have that $v\in X_T^+$. Then, identically to the
classical Banach fixed point theorem, $v$ will be a fixed point of
$\Phi$ on $X_T$ and a unique fixed point on $X_T^+$. Then, this $v$
is the nonnegative solution of \eqref{CauchyVlasoveqn} on the
interval $[0;T]$. By the note above, $v_t(x)\leq C$. Changing
initial value in \eqref{CauchyVlasoveqn} onto $\rho_t
\bigr|_{t=T}(x)=v_T(x)$ we may extend all our considerations on the
time-interval $[T;2T]$ with the same estimate $v_t(x)\leq C$; and
so on. As a a result, \eqref{CauchyVlasoveqn} has a unique global bounded
non-negative solution $\rho_t(x)$ on $\R_+$.

Consider now
$$k_t(\eta)=e_\la(\rho_t,\eta)\in \mathcal{K}_{C},$$
then
$$
\frac{\partial}{\partial t}e_\la(\rho_t,\eta)=\sum_{x\in{\eta}}\frac{\partial\rho_{t}}{\partial t}(x)e_\la(\rho_t,\eta\setminus x).
$$
Using (\ref{CauchyVlasoveqn}) and \eqref{vadjoint}, we immediately
conclude that $k_t(\eta)=e_\la(\rho_t,\eta)$ is a solution to
\eqref{CauchyVlasov}.
\end{proof}

\noindent The main result of the paper is formulated in the next theorem. Its proof will be given in Subsection 3.4.

\begin{theorem}\label{theor22}
Under conditions of Theorem~\ref{theor11} the semigroup
$\hat{U}_{t,\varepsilon }$ converges strongly to the semigroup
$\hat{U}_{t}^{V}$ as $\varepsilon \rightarrow 0$ uniformly on any
finite intervals of time.
\end{theorem}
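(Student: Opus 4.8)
The plan is to obtain Theorem~\ref{theor22} as a direct application of the abstract result Theorem~\ref{gentheor} to the family $L_\varepsilon=\hat{L}_{\varepsilon,\mathrm{ren}}$, $\varepsilon>0$, and $L_0=\hat{V}=A_1+A_2$, whose generation properties are supplied by Theorems~\ref{theor11} and~\ref{co1}. I take the splitting $L_\varepsilon=A_1(\varepsilon)+A_2(\varepsilon)$ with $A_1(\varepsilon)=A_1+\varepsilon B_1$ and $A_2(\varepsilon)=A_2+\varepsilon B_2$ on $D_1$ for $\varepsilon>0$, together with $A_1(0)=A_1$, $A_2(0)=A_2$ on $D_0$. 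Assumption~1 of \textbf{(A)} then holds by Proposition~\ref{pr1}, Remark~\ref{rem001}, and the remark following Proposition~\ref{pr002}. Throughout I fix a real $\lambda>0$ (one may take $\beta=0$, since each $A_1(\varepsilon)$ has spectrum in $(-\infty,0]$). The key simplification is that $A_1(\varepsilon)$ is multiplication by $-\Psi_\varepsilon(\eta)$, where $\Psi_\varepsilon(\eta):=m|\eta|+\varepsilon\varkappa^{-}E^{a^{-}}(\eta)\geq 0$ and $\Psi_0(\eta)=m|\eta|$; hence its resolvent is multiplication by $(-\Psi_\varepsilon(\eta)-\lambda)^{-1}$, and every verification below is an explicit pointwise computation followed by dominated convergence in $\mathcal{L}_C$.

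Assumptions 2(a)--(c) are then routine. For 2(a) the resolvent difference acts as multiplication by $(-\Psi_\varepsilon-\lambda)^{-1}-(-\Psi_0-\lambda)^{-1}$, which tends to $0$ pointwise and is bounded by $2/\lambda$; dominated convergence yields the strong convergence $(A_1(\varepsilon)-\lambda\1)^{-1}\s(A_1(0)-\lambda\1)^{-1}$. For 2(b) one notes $\Psi_\varepsilon\geq\Psi_0$, whence $|\Psi_\varepsilon+\lambda|\geq|\Psi_0+\lambda|$ pointwise and $\|(A_1(\varepsilon)-\lambda\1)^{-1}\|=\esssup_\eta|\Psi_\varepsilon(\eta)+\lambda|^{-1}$ is nonincreasing in $\varepsilon$. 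For 2(c), Propositions~\ref{pr001} and~\ref{pr002} combine into $\|A_2(\varepsilon)G\|_C\leq(\delta+a)\|A_1(\varepsilon)G\|_C$ with $\delta+a<\tfrac12$ — this is precisely what the factor $4$ in \eqref{bigmort} and \eqref{bigcomp} secures, allowing $\delta=a=\tfrac14$. Since $\|A_1(\varepsilon)(A_1(\varepsilon)-\lambda\1)^{-1}\|=\esssup_\eta\Psi_\varepsilon(\eta)|\Psi_\varepsilon(\eta)+\lambda|^{-1}\leq 1$, substituting $G=(A_1(\varepsilon)-\lambda\1)^{-1}G'$ gives $\|A_2(\varepsilon)(A_1(\varepsilon)-\lambda\1)^{-1}\|\leq\delta+a<\tfrac12<1$, uniformly in $\varepsilon\geq 0$.

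The substantial step is 2(d). Write $T_\varepsilon:=A_2(\varepsilon)(A_1(\varepsilon)-\lambda\1)^{-1}$; since $\sup_{\varepsilon\geq0}\|T_\varepsilon\|\leq\delta+a<1$, each $\1+T_\varepsilon$ is boundedly invertible, and from $(\1+T_\varepsilon)^{-1}-(\1+T_0)^{-1}=(\1+T_\varepsilon)^{-1}(T_0-T_\varepsilon)(\1+T_0)^{-1}$ together with $\|(\1+T_\varepsilon)^{-1}\|\leq(1-\delta-a)^{-1}$ it suffices to prove $T_\varepsilon\s T_0$ strongly. I split $T_\varepsilon-T_0=A_2\bigl[(A_1(\varepsilon)-\lambda\1)^{-1}-(A_1-\lambda\1)^{-1}\bigr]+\varepsilon B_2(A_1(\varepsilon)-\lambda\1)^{-1}$. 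In the first summand the bracket is multiplication by $\varepsilon\varkappa^{-}E^{a^{-}}(\eta)\bigl[(\Psi_\varepsilon(\eta)+\lambda)(\Psi_0(\eta)+\lambda)\bigr]^{-1}$, which maps $\mathcal{L}_C$ into $D_0$; applying the relative bound of Proposition~\ref{pr001} reduces matters to $\|A_1[\,\cdots\,]G\|_C$, whose multiplier $m|\eta|\varepsilon\varkappa^{-}E^{a^{-}}(\eta)\bigl[(\Psi_\varepsilon(\eta)+\lambda)(\Psi_0(\eta)+\lambda)\bigr]^{-1}$ tends to $0$ pointwise and is bounded by $1$, so dominated convergence sends this summand to $0$ strongly.

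The second summand is the crux, and I expect it to be the main obstacle: the relative bound of Proposition~\ref{pr002} is only uniform in $\varepsilon$ and does not decay, so one must unfold $B_2$ explicitly. Its value at $\eta$ is $\varepsilon\varkappa^{+}\sum_{y\in\eta}\int_{\R^{d}}a^{+}(x-y)(-\Psi_\varepsilon(\eta\cup x)-\lambda)^{-1}G(\eta\cup x)\,dx$, and collapsing the sum and integral by the Lebesgue--Poisson identity $\int_{\Ga_0}\int_{\R^d}H(x,\eta\cup x)\,dx\,d\la(\eta)=\int_{\Ga_0}\sum_{x\in\eta}H(x,\eta)\,d\la(\eta)$ gives
\begin{equation*}
\bigl\|\varepsilon B_2(A_1(\varepsilon)-\lambda\1)^{-1}G\bigr\|_C\leq\frac{\varepsilon\varkappa^{+}}{C}\int_{\Ga_0}\frac{E^{a^{+}}(\eta)}{|\Psi_\varepsilon(\eta)+\lambda|}\,|G(\eta)|\,C^{|\eta|}\,d\la(\eta).
\end{equation*}
For each fixed $\eta$ the factor $\varepsilon\varkappa^{+}E^{a^{+}}(\eta)/|\Psi_\varepsilon(\eta)+\lambda|$ tends to $0$ as $\varepsilon\to0$, while \eqref{bigcomp} yields $\varepsilon\varkappa^{+}E^{a^{+}}(\eta)\leq\tfrac{C}{4}\varepsilon\varkappa^{-}E^{a^{-}}(\eta)\leq\tfrac{C}{4}\Psi_\varepsilon(\eta)$, so that factor is bounded by $\tfrac{C}{4}$ uniformly; dominated convergence then drives the integral, and with it the second summand, to $0$. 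Thus $T_\varepsilon\s T_0$, which establishes 2(d). With all of \textbf{(A)} verified, Theorem~\ref{gentheor} delivers the strong convergence of $\hat{U}_{t,\varepsilon}$ to $\hat{U}_t^{V}$, uniformly on finite time intervals. Note that it is exactly the competition-domination hypothesis $C\varkappa^{-}a^{-}\geq4\varkappa^{+}a^{+}$ that furnishes the $\varepsilon$-independent integrable majorant controlling $E^{a^{+}}$ by $\Psi_\varepsilon$, so this inequality is indispensable here and not merely for the generation statements.
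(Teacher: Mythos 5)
Your proposal is correct and follows essentially the same route as the paper: verification of Assumptions \textbf{(A)} for the splitting $A_1(\varepsilon)=A_1+\varepsilon B_1$, $A_2(\varepsilon)=A_2+\varepsilon B_2$, with 2(a)--(c) handled by explicit multiplication-operator computations and Propositions~\ref{pr001}--\ref{pr002}, and 2(d) reduced via the resolvent identity and the uniform bound $\Vert(\1+T_\varepsilon)^{-1}\Vert\leq(1-\Vert T_\varepsilon\Vert)^{-1}$ to the strong convergence of the two summands of $T_\varepsilon$. Your treatment of those summands (applying the relative bound of Proposition~\ref{pr001} to the resolvent difference, and unfolding $B_2$ via the Lebesgue--Poisson identity together with \eqref{bigcomp}) is just a one-step version of the paper's Lemmas~\ref{L1} and~\ref{L2}, which factor through $(A_1-\lambda\1)(A_1(\varepsilon)-\lambda\1)^{-1}\s\1$ and the bound $\Vert B_2G\Vert_C\leq\tfrac14\Vert B_1G\Vert_C$ respectively.
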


\subsection{Proofs}

According to Theorem~\ref{gentheor}, the statement of
Theorem~\ref{theor22} will be proved once we verify Assumptions {\bf
(A)} for the operators $\left( A_{1}(\varepsilon), D_{1}\right)$,
$\left( A_{2}(\varepsilon), D_{1}\right)$, $\varepsilon > 0$,
defined in the previous subsection. Note, that $A_{1}(0)=A_{1}$ and
$A_{2}(0)=A_{2}$ are defined on the domain $D_{0}$.

In the following proposition we verify Assumption 2(a) of {\bf (A)}.
\begin{proposition}\label{Prop2.1}
Let $\lambda > 0$ then
\begin{equation*}
\left( A_{1}(\varepsilon)-\lambda 1\!\!1 \right) ^{-1}\overset{s}{%
\longrightarrow }\left( A_{1}-\lambda 1\!\!1 \right) ^{-1},\varepsilon \rightarrow
0.
\end{equation*}
\end{proposition}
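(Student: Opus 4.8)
The plan is to exploit that both $A_1(\eps)$ and $A_1$ act as \emph{multiplication} operators on $\L_C$, so that everything reduces to an elementary estimate plus dominated convergence. Indeed, from the Proposition above, $A_1(\eps)=A_1+\eps B_1$ acts by
$$(A_1(\eps)G)(\eta)=-\bigl(m|\eta|+\eps\varkappa^{-}E^{a^{-}}(\eta)\bigr)G(\eta),$$
so for $\la>0$ the resolvent is again a multiplication operator,
$$\bigl((A_1(\eps)-\la\1)^{-1}G\bigr)(\eta)=-\frac{G(\eta)}{m|\eta|+\eps\varkappa^{-}E^{a^{-}}(\eta)+\la},$$
and likewise $\bigl((A_1-\la\1)^{-1}G\bigr)(\eta)=-G(\eta)/(m|\eta|+\la)$. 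First I would record these explicit formulas and subtract, obtaining for $\la$-a.a.\ $\eta$
$$\bigl((A_1(\eps)-\la\1)^{-1}G-(A_1-\la\1)^{-1}G\bigr)(\eta)=G(\eta)\,\frac{\eps\varkappa^{-}E^{a^{-}}(\eta)}{\bigl(m|\eta|+\eps\varkappa^{-}E^{a^{-}}(\eta)+\la\bigr)\bigl(m|\eta|+\la\bigr)}.$$

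Next I would estimate the scalar multiplier. Writing $s:=\eps\varkappa^{-}E^{a^{-}}(\eta)\geq0$ and $t:=m|\eta|+\la>0$, it equals $s/\bigl((t+s)t\bigr)$, and since $s/(t+s)\leq1$ we get the uniform bound
$$\frac{\eps\varkappa^{-}E^{a^{-}}(\eta)}{\bigl(m|\eta|+\eps\varkappa^{-}E^{a^{-}}(\eta)+\la\bigr)\bigl(m|\eta|+\la\bigr)}\leq\frac{1}{m|\eta|+\la}\leq\frac{1}{\la},$$
valid for every $\eps>0$ and $\la$-a.a.\ $\eta$. Consequently the integrand defining the norm satisfies
$$C^{|\eta|}\,\bigl|\bigl((A_1(\eps)-\la\1)^{-1}G-(A_1-\la\1)^{-1}G\bigr)(\eta)\bigr|\leq\frac{1}{\la}\,C^{|\eta|}|G(\eta)|,$$
whose right-hand side is $\la_C$-integrable precisely because $G\in\L_C$; this supplies the dominating function, uniformly in $\eps$.

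Finally I would observe that for $\la$-a.a.\ fixed $\eta$ the multiplier tends to $0$ as $\eps\goto0$, so the integrand converges pointwise to $0$; the claim $\|(A_1(\eps)-\la\1)^{-1}G-(A_1-\la\1)^{-1}G\|_C\goto0$ then follows from the dominated convergence theorem, which is exactly the asserted strong convergence. I do not expect a genuine obstacle: the only point demanding slight care is the finiteness and measurability of $E^{a^{-}}(\eta)$, which is guaranteed for $\la$-a.a.\ $\eta$ by $a^{-}\in L^1(\R^d)$, together with verifying the uniform $L^1$-domination so that the interchange of limit and integral is legitimate.
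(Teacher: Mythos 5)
Your proposal is correct and follows essentially the same route as the paper: both write the resolvents as explicit multiplication operators, identify the difference as multiplication by the factor $F_{\varepsilon}(\eta)=\varepsilon \varkappa ^{-}E^{a^{-}}(\eta)\bigl[\bigl( m\left\vert \eta \right\vert+\varepsilon \varkappa ^{-}E^{a^{-}}(\eta) +\lambda \bigr)\bigl( m\left\vert \eta \right\vert +\lambda \bigr)\bigr]^{-1}$, bound it uniformly by $1/\lambda$, and conclude by dominated convergence. Your version merely spells out the domination argument that the paper leaves implicit.
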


\begin{proof}
For any $G\in \mathcal{L}_{C}$%
\begin{align*}
&\left\Vert \left( A_{1}(\varepsilon)-\lambda 1\!\!1 \right)
^{-1}G-\left(
A_{1}-\lambda 1\!\!1 \right) ^{-1}G\right\Vert _{C} \\
=&\int_{\Gamma _{0}}\left\vert G\left( \eta \right) \left( \frac{1}{%
-m\left\vert \eta \right\vert -\varepsilon \varkappa ^{-}E^{a^{-}}\left(
\eta \right) -\lambda }-\frac{1}{-m\left\vert \eta \right\vert -\lambda }%
\right) \right\vert C^{\left\vert \eta \right\vert }d\lambda \left( \eta
\right)  \\
= &\int_{\Gamma _{0}}\left\vert G\left( \eta \right) \right\vert
F_{\varepsilon }\left( \eta \right) C^{\left\vert \eta \right\vert
}d\lambda \left( \eta \right) ,
\end{align*}%
where
\begin{equation*}
F_{\varepsilon }\left( \eta \right) :=\frac{\varepsilon \varkappa
^{-}E^{a^{-}}\left( \eta \right) }{\left( m\left\vert \eta \right\vert
+\varepsilon \varkappa ^{-}E^{a^{-}}\left( \eta \right) +\lambda \right)
\left( m\left\vert \eta \right\vert +\lambda \right) },~~\eta \in \Gamma
_{0}.
\end{equation*}%
Since $0\leq F_{\varepsilon }\left( \eta \right) <1/\lambda$ and $F_{\varepsilon
}\left( \eta \right) \rightarrow 0$ as $\varepsilon \rightarrow 0$ for any $\eta
\in \Gamma _{0}$, we get the desired statement.
\end{proof}
Next we check Assumption 2(b) of  {\bf (A)}.
\begin{proposition}\label{Prop2.2}
Let $\lambda > 0$ be arbitrary and fixed. Then
\begin{equation*}
\sup_{\varepsilon \geq 0}\left\Vert \left( A_{1}(\varepsilon)-\lambda 1\!\!1 \right)
^{-1}\right\Vert \leq \left\Vert \left( A_{1}-\lambda 1\!\!1 \right) ^{-1}\right\Vert .
\end{equation*}
\end{proposition}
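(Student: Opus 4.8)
The plan is to exploit that both $A_1(\eps)$ and $A_1=A_1(0)$ act as multiplication (diagonal) operators on $\L_C$, so their resolvents are again multiplication operators whose norms are given by a single essential supremum. First I would record the diagonal action $(A_1(\eps)G)(\eta)=-(m\n+\eps\varkappa^-E^{a^-}(\eta))G(\eta)$, so that for $\la>0$
\[
\bigl((A_1(\eps)-\la\1)^{-1}G\bigr)(\eta)=\frac{G(\eta)}{-m\n-\eps\varkappa^-E^{a^-}(\eta)-\la},
\]
and likewise, with $\eps=0$, for $(A_1-\la\1)^{-1}$. In both cases $\la>0$ indeed lies in the resolvent set, since the denominators never vanish.

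Next I would invoke the standard fact that on $\L_C=L^1(\Ga_0,C^{\n}d\la)$ the operator norm of multiplication by a measurable symbol $h$ coincides with its essential supremum: the bound $\|hG\|_C\le\|h\|_\infty\|G\|_C$ is immediate, and it is sharp by concentrating $G$ where $|h|$ is close to its essential supremum. Applying this to the resolvents above gives
\[
\bigl\|(A_1(\eps)-\la\1)^{-1}\bigr\|=\esssup_{\eta\in\Ga_0}\frac{1}{m\n+\eps\varkappa^-E^{a^-}(\eta)+\la}.
\]

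The decisive, and essentially the only, point is a pointwise comparison of the two symbols. Since $a^-\ge0$ forces $E^{a^-}(\eta)\ge0$, and $\eps,\varkappa^-\ge0$, the denominator satisfies $m\n+\eps\varkappa^-E^{a^-}(\eta)+\la\ge m\n+\la>0$ for every $\eta\in\Ga_0$; hence the $A_1(\eps)$-symbol is dominated pointwise by the $A_1$-symbol. Taking essential suprema yields $\|(A_1(\eps)-\la\1)^{-1}\|\le\|(A_1-\la\1)^{-1}\|$ for each fixed $\eps\ge0$, and passing to the supremum over $\eps\ge0$ gives the assertion (in fact with equality, attained at $\eps=0$). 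I do not expect any genuine obstacle here: the whole argument rests on the nonnegativity of $E^{a^-}$, and the only technical care required is the $L^1$-norm identification for multiplication operators, which is routine.
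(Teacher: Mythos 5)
Your argument is correct and is essentially the paper's own proof: both rest on the diagonal form of the resolvents and the pointwise bound $m|\eta|+\varepsilon\varkappa^-E^{a^-}(\eta)+\lambda\geq m|\eta|+\lambda$ coming from $E^{a^-}\geq 0$. The only cosmetic difference is that the paper integrates this pointwise domination directly against $|G(\eta)|C^{|\eta|}d\lambda(\eta)$ instead of passing through the norm-equals-essential-supremum identification for multiplication operators on $L^1$.
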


\begin{proof}
For any $G\in \mathcal{L}_{C}$ and any $\varepsilon >0$
\begin{align*}
&\left\Vert \left( A_{1}(\varepsilon)-\lambda 1\!\!1 \right)
^{-1}G\right\Vert _{C} \\
=&\int_{\Gamma _{0}}\left\vert G\left( \eta \right) \right\vert \frac{1}{%
m\left\vert \eta \right\vert +\varepsilon \varkappa ^{-}E^{a^{-}}\left( \eta
\right) +\lambda }C^{\left\vert \eta \right\vert }d\lambda \left( \eta
\right)  \\
\leq &\int_{\Gamma _{0}}\left\vert G\left( \eta \right) \right\vert \frac{1%
}{m\left\vert \eta \right\vert +\lambda }C^{\left\vert \eta \right\vert
}d\lambda \left( \eta \right) =\left\Vert \left( A_{1}-\lambda 1\!\!1 \right)
^{-1}G\right\Vert _{C} \\
\leq &\left\Vert \left( A_{1}-\lambda 1\!\!1 \right) ^{-1}\right\Vert \cdot
\left\Vert G\right\Vert _{C}.
\end{align*}
This finishes the proof.
\end{proof}
Assumption 2(c) of  {\bf (A)} is proved in the next Proposition.
\begin{proposition}\label{2(cc)}
Let conditions of Theorem \ref{theor11} be satisfied.
Then, for any $\lambda>0$
\begin{equation}
\sup_{\varepsilon\geq 0}\left\Vert A_{2}(\varepsilon)\left( A_{1}(\varepsilon)-\lambda 1\!\!1 \right)
^{-1}\right\Vert < \frac{1}{2}\label{2(c)}
\end{equation}

\end{proposition}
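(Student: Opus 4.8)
The plan is to write $A_2(\varepsilon) = A_2 + \varepsilon B_2$ and estimate the two pieces of $A_2(\varepsilon)(A_1(\varepsilon)-\lambda 1\!\!1)^{-1}$ separately, bounding each by $\frac14$ uniformly in $\varepsilon \geq 0$; the triangle inequality then gives the claimed bound $\frac12$. The engine of the whole argument is one elementary remark: since $A_1(\varepsilon)$ acts as multiplication by the nonpositive function $-\bigl(m|\eta|+\varepsilon\varkappa^- E^{a^-}(\eta)\bigr)$, the composition $A_1(\varepsilon)(A_1(\varepsilon)-\lambda 1\!\!1)^{-1}$ is multiplication by $\frac{m|\eta|+\varepsilon\varkappa^- E^{a^-}(\eta)}{m|\eta|+\varepsilon\varkappa^- E^{a^-}(\eta)+\lambda}$, whose modulus is at most $1$ for every $\lambda>0$. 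Hence $\|A_1(\varepsilon)(A_1(\varepsilon)-\lambda 1\!\!1)^{-1}\|\leq 1$ for all $\varepsilon\geq 0$, and this is exactly what converts the relative bounds of Propositions~\ref{pr001} and \ref{pr002} into norm bounds on the products with the resolvent.

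For the term $A_2(A_1(\varepsilon)-\lambda 1\!\!1)^{-1}$ I fix $H\in\mathcal{L}_C$ and set $G:=(A_1(\varepsilon)-\lambda 1\!\!1)^{-1}H\in D_1$. Proposition~\ref{pr001} gives $\|A_2 G\|_C \leq \delta\|A_1(\varepsilon)G\|_C$ as soon as $\frac{\varkappa^- C+\varkappa^+}{m}\leq\delta$, and by the remark above $\|A_1(\varepsilon)G\|_C\leq\|H\|_C$; hence $\|A_2(A_1(\varepsilon)-\lambda 1\!\!1)^{-1}\|\leq\delta$. Condition~\eqref{bigmort} forces $\frac{\varkappa^- C+\varkappa^+}{m}<\frac14$, so this piece is $<\frac14$. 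The case $\varepsilon=0$ is handled identically, using the first estimate of Proposition~\ref{pr001} on $D_0$.

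For the term $\varepsilon B_2(A_1(\varepsilon)-\lambda 1\!\!1)^{-1}$ I invoke Proposition~\ref{pr002}, whose hypothesis $\varepsilon\varkappa^+ E^{a^+}(\eta)<\delta C\bigl(\varepsilon\varkappa^- E^{a^-}(\eta)+m|\eta|\bigr)$ for $\eta\neq\emptyset$ I verify with $\delta=\frac14$ directly from \eqref{bigcomp}. Applying $C\varkappa^- a^-(x)\geq 4\varkappa^+ a^+(x)$ with argument $x-y$ and summing over $x\in\eta$, $y\in\eta\setminus x$ yields $\varkappa^+ E^{a^+}(\eta)\leq\frac{C\varkappa^-}{4}E^{a^-}(\eta)$, so $\varepsilon\varkappa^+ E^{a^+}(\eta)\leq\frac{C}{4}\varepsilon\varkappa^- E^{a^-}(\eta)<\frac{C}{4}\bigl(\varepsilon\varkappa^- E^{a^-}(\eta)+m|\eta|\bigr)$. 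Proposition~\ref{pr002} then gives $\|\varepsilon B_2 G\|_C\leq a\|A_1(\varepsilon)G\|_C$ with $a<\frac14$, and the same substitution $G=(A_1(\varepsilon)-\lambda 1\!\!1)^{-1}H$ together with $\|A_1(\varepsilon)(A_1(\varepsilon)-\lambda 1\!\!1)^{-1}\|\leq 1$ produces $\|\varepsilon B_2(A_1(\varepsilon)-\lambda 1\!\!1)^{-1}\|<\frac14$; for $\varepsilon=0$ this term vanishes.

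Summing the two estimates gives the bound $<\frac12$, uniformly in $\varepsilon\geq 0$. I do not anticipate a genuine obstacle, since Propositions~\ref{pr001} and \ref{pr002} already carry the analytic weight; the only points needing care are the uniformity in $\varepsilon$ (secured because the constant $1$ in $\|A_1(\varepsilon)(A_1(\varepsilon)-\lambda 1\!\!1)^{-1}\|\leq 1$ is $\varepsilon$-independent) and the bookkeeping that the two quarters coming from the factor $4$ in \eqref{bigmort} and \eqref{bigcomp} indeed combine to a quantity strictly below $\frac12$.
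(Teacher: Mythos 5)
Your proof is correct and follows essentially the same route as the paper's: split $A_{2}(\varepsilon)=A_{2}+\varepsilon B_{2}$, use Propositions~\ref{pr001} and~\ref{pr002} together with the bound $\lV A_{1}(\varepsilon)\left( A_{1}(\varepsilon)-\lambda \1\right)^{-1}\rV\leq 1$ (which the paper uses in the equivalent form $\lV A_{1}G\rV_{C}\leq\lV (A_{1}-\lambda\1)G\rV_{C}$) to get $\frac14$ for each piece, and add. The only difference is that you spell out the verification of the hypothesis of Proposition~\ref{pr002} with $\delta=\frac14$ from \eqref{bigcomp}, which the paper leaves implicit.
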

\begin{proof} First we prove assertion for $\varepsilon=0$.
Since $D\left( A_{1}\right)= D\left( A_{2}\right)=D_{0} $ and $Ran\left(
\left( A_{1}-\lambda 1\!\!1 \right) ^{-1}\right) =D\left( A_{1}\right)$, the operator $A_{2}\left( A_{1}-\lambda 1\!\!1 \right) ^{-1}$ is well defined.
Next, inequality \eqref{bigmort} and Proposition~\ref{pr001} yields%
\begin{equation}
\left\Vert A_{2}\left( A_{1}-\lambda 1\!\!1 \right) ^{-1}\right\Vert <\frac{1}{4}.
\label{star}
\end{equation}%
Indeed,
\begin{equation*}
\left\Vert A_{2}G\right\Vert _{C}\leq a\left\Vert A_{1}G\right\Vert
_{C}<a\left\Vert \left( A_{1}-\lambda 1\!\!1 \right) G\right\Vert _{C}
\end{equation*}
with $a<\frac{1}{4}$. Therefore,
\begin{equation*}
\left\Vert A_{2}\left( A_{1}-\lambda 1\!\!1 \right) ^{-1}G\right\Vert _{C}<\frac{1}{%
4}\left\Vert G\right\Vert _{C},
\end{equation*}%
and \eqref{star} is proved.

Now, let $\varepsilon > 0$ be arbitrary and fixed.
The main arguments we use to show
$$
\left\Vert A_{2}(\varepsilon)\left( A_{1}(\varepsilon)-\lambda 1\!\!1 \right)
^{-1}\right\Vert < \frac{1}{2}
$$
are the following:

1) $D\left( A_{1}(\varepsilon) \right) =D_{1} \subset D_{0}=D\left(
A_{2}\right) $. Hence, $A_{2}\left( A_{1}(\varepsilon)-\lambda 1\!\!1
\right) ^{-1}$ is well-defined on $\mathcal{L}_{C}$. Moreover,
Proposition~\ref{pr001} implies

\begin{equation*}
\left\Vert A_{2}\left( A_{1}(\varepsilon)-\lambda 1\!\!1 \right)
^{-1}\right\Vert <\frac{1}{4}, \quad \varepsilon >0.
\end{equation*}

2) $D\left( B_{2}\right) =D\left( A_{1}(\varepsilon) \right) =D_{1} $ and for any
$\varepsilon >0$
\begin{equation*}
\left\Vert \varepsilon B_{2}\left( A_{1}(\varepsilon)-\lambda 1\!\!1
\right) ^{-1}\right\Vert <\frac{1}{4},
\end{equation*}
which follows from Proposition~\ref{pr002}.

3) Since $A_{2}(\varepsilon):=A_{2}+\varepsilon B_{2}$, we have
\begin{equation}
\left\Vert A_{2}(\varepsilon)\left( A_{1}(\varepsilon)-\lambda 1\!\!1 \right)
^{-1}\right\Vert <\frac{1}{2}.  \label{againstar}
\end{equation}
The latter concludes the proof.
\end{proof}

We set%
\begin{align*}
Q_{\varepsilon } =\left( A_{2}(\varepsilon)\left( A_{1}(\varepsilon)-\lambda 1\!\!1 \right) ^{-1}+1\right) ^{-1},
\quad\quad
Q =\left( A_{2}\left( A_{1}-\lambda 1\!\!1 \right) ^{-1}+1\!\!1\right) ^{-1}.
\end{align*}%
%
In order to verify Assumption 2(d) of {\bf (A)} we have to show that $Q_{\varepsilon }\overset{s}{\longrightarrow }Q$ as $\varepsilon\rightarrow 0$.

Suppose that we can show that
\begin{equation} \label{3star}
\begin{aligned}
A_{2}\left( A_{1}(\varepsilon)-\lambda 1\!\!1 \right) ^{-1}&\overset{s}{%
\longrightarrow }A_{2}\left( A_{1}-\lambda 1\!\!1 \right)
^{-1},&\varepsilon
\rightarrow 0.  \\
\varepsilon B_{2}\left( A_{1}(\varepsilon)-\lambda 1\!\!1 \right) ^{-1}&\overset%
{s}{\longrightarrow }0,&\varepsilon  \rightarrow 0.
\end{aligned}%
\end{equation}
Then,%
\begin{align*}
C_{\varepsilon } :=&A_{2}(\varepsilon)\left(
A_{1}(\varepsilon)-\lambda 1\!\!1 \right) ^{-1} \\
=&A_{2}\left( A_{1}+\varepsilon B_{1}-\lambda 1\!\!1 \right)
^{-1}+\varepsilon
B_{2}\left( A_{1}+\varepsilon B_{1}-\lambda 1\!\!1 \right) ^{-1}\overset{s}{%
\longrightarrow }A_{2}\left( A_{1}-\lambda 1\!\!1 \right) ^{-1}
\end{align*}%
To check
\begin{equation}
Q_{\varepsilon }=\left( C_{\varepsilon }+1\!\!1\right) ^{-1}\overset{s}{%
\longrightarrow }Q  \label{twostar}
\end{equation}%
we proceed as follows:
\begin{align*}
&\left( C_{\varepsilon }+1\!\!1\right) ^{-1}-Q \\
=&\left( C_{\varepsilon }+1\!\!1\right) ^{-1}-\left( A_{2}\left(
A_{1}-\lambda 1\!\!1
\right) ^{-1}+1\!\!1\right) ^{-1} \\
=&\left( C_{\varepsilon }+1\!\!1\right) ^{-1}\left( A_{2}\left(
A_{1}-\lambda 1\!\!1 \right) ^{-1}+1\!\!1-C_{\varepsilon }-1\!\!1\right) \left(
A_{2}\left( A_{1}-\lambda 1\!\!1
\right) ^{-1}+1\!\!1\right) ^{-1} \\
=&\left( C_{\varepsilon }+1\!\!1\right) ^{-1}\left( A_{2}\left(
A_{1}-\lambda 1\!\!1 \right) ^{-1}-C_{\varepsilon }\right) \left(
A_{2}\left( A_{1}-\lambda 1\!\!1 \right) ^{-1}+1\!\!1\right) ^{-1}.
\end{align*}%
Assuming \eqref{3star} it is obvious now that convergence \eqref%
{twostar} is equivalent to
\begin{equation*}
\sup_{\varepsilon >0}\left\Vert \left( C_{\varepsilon }+1\!\!1\right)
^{-1}\right\Vert <\infty,
\end{equation*}
which is clear from
\begin{equation*}
\left\Vert \left( C_{\varepsilon }+1\!\!1\right) ^{-1}\right\Vert \leq \frac{1}{%
1-\left\Vert C_{\varepsilon }\right\Vert }\quad \mathrm{and}\quad\left\Vert C_{\varepsilon }\right\Vert <\frac{1}{2}.
\end{equation*}
The last bound we conclude from \eqref{againstar}.
As a result we shall have established Theorem~\ref{theor22} if we show
\eqref{3star}.

\begin{lemma}\label{L1}
$A_{2}\left( A_{1}(\varepsilon)-\lambda 1\!\!1 \right) ^{-1}\overset{s}{%
\longrightarrow }A_{2}\left( A_{1}-\lambda 1\!\!1 \right) ^{-1},\; \mathrm{as}\; \varepsilon
\rightarrow 0.$
\end{lemma}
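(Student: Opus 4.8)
The plan is to reduce the claim to the dominated convergence theorem in $\L_C=L^1(\Ga_0,C^{|\cdot|}d\la)$. The key structural observation is that both $A_1$ and $A_1(\varepsilon)=A_1+\varepsilon B_1$ are \emph{multiplication} operators, so their resolvents act explicitly by
\begin{equation*}
\bigl((A_1(\varepsilon)-\lambda 1\!\!1)^{-1}G\bigr)(\xi)=-\frac{G(\xi)}{m|\xi|+\varepsilon\varkappa^{-}E^{a^{-}}(\xi)+\lambda},\qquad \varepsilon\geq 0,
\end{equation*}
the case $\varepsilon=0$ giving $(A_1-\lambda 1\!\!1)^{-1}$. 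Writing $R_\varepsilon:=(A_1(\varepsilon)-\lambda 1\!\!1)^{-1}$ and $R_0:=(A_1-\lambda 1\!\!1)^{-1}$, and using $E^{a^{-}}(\xi)\geq 0$, I obtain the $\varepsilon$-uniform pointwise domination
\begin{equation*}
|(R_\varepsilon G)(\xi)|\leq|(R_0 G)(\xi)|=\frac{|G(\xi)|}{m|\xi|+\lambda},\qquad \xi\in\Ga_0 .
\end{equation*}
Note that $R_\varepsilon$ maps $\L_C$ into $D_1\subset D_0=D(A_2)$, so $A_2 R_\varepsilon$ is well defined on all of $\L_C$, and likewise for $A_2 R_0$.

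First I would establish pointwise convergence $(A_2 R_\varepsilon G)(\eta)\to(A_2 R_0 G)(\eta)$ for $\la$-a.a.\ $\eta$. Written out from the definition of $A_2$, the death term is a finite sum over $x,y\in\eta$ of $a^{-}(x-y)(R_\varepsilon G)(\eta\setminus x)$, and each multiplier denominator converges as $\varepsilon\to 0$, so the finite sum converges. The jump term $\varkappa^{+}\sum_{y\in\eta}\int_{\R^{d}}a^{+}(x-y)(R_\varepsilon G)(\eta\setminus y\cup x)\,dx$ has integrand converging pointwise in $x$ and dominated by $a^{+}(x-y)|G(\eta\setminus y\cup x)|/(m|\eta|+\lambda)$, which is integrable in $x$ for a.a.\ $\eta$; a further dominated convergence in the $x$-variable then handles this term.

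The heart of the matter is an $\varepsilon$-independent majorant lying in $\L_C$. Using the uniform bound above I would dominate
\begin{equation*}
|(A_2 R_\varepsilon G)(\eta)|\leq\varkappa^{-}\sum_{x\in\eta}\sum_{y\in\eta\setminus x}a^{-}(x-y)|(R_0 G)(\eta\setminus x)|+\varkappa^{+}\sum_{y\in\eta}\int_{\R^{d}}a^{+}(x-y)|(R_0 G)(\eta\setminus y\cup x)|\,dx=:\Psi(\eta),
\end{equation*}
which no longer depends on $\varepsilon$. To see $\Psi\in\L_C$, put $H:=|R_0 G|\geq 0$; then $|\xi|H(\xi)\leq m^{-1}|G(\xi)|$, so $|\cdot|H(\cdot)\in\L_C$ and $H\in D_0$. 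The right-hand side $\Psi$ is precisely what the triangle inequality applied inside $A_2$ produces from the nonnegative function $H$, so the computation underlying Proposition~\ref{pr001}, carried out with the kernels replaced by their absolute values, yields $\|\Psi\|_C\leq\delta\|A_1 H\|_C=\delta m\,\||\cdot|H\|_C<\infty$. I expect this to be the main obstacle: one has to confirm that the relative bound of Proposition~\ref{pr001} survives the passage from $\|A_2 G\|_C$ to the $\L_C$-norm of the pointwise majorant of $|A_2 H|$. This is the case, since that estimate is itself obtained via triangle-inequality and Lebesgue--Poisson bookkeeping, which is insensitive to the signs of the two kernels.

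With the a.e.\ pointwise limit and the fixed majorant $\Psi\in\L_C$ in hand, the dominated convergence theorem on $L^1(\Ga_0,C^{|\cdot|}d\la)$ gives $\|A_2 R_\varepsilon G-A_2 R_0 G\|_C\to 0$ for every $G\in\L_C$, which is exactly the asserted strong convergence.
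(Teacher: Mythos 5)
Your proof is correct, but it follows a genuinely different route from the paper's. The paper exploits the factorization
\begin{equation*}
A_{2}\left( A_{1}(\varepsilon)-\lambda \1 \right)^{-1}
= A_{2}\left( A_{1}-\lambda \1 \right)^{-1}\,
\left( A_{1}-\lambda \1 \right)\left( A_{1}(\varepsilon)-\lambda \1 \right)^{-1},
\end{equation*}
valid because $D(A_1(\varepsilon))=D_1\subset D_0=D(A_1)=D(A_2)$: the left factor $A_{2}(A_{1}-\lambda \1)^{-1}$ is a fixed bounded operator (norm $<\tfrac14$ by Proposition~\ref{pr001}), so the lemma reduces to the strong convergence $(A_{1}-\lambda \1)(A_{1}(\varepsilon)-\lambda \1)^{-1}\s\1$, which concerns only multiplication operators and is a one-line dominated-convergence computation with the explicit symbol $\varepsilon\varkappa^{-}E^{a^{-}}(\eta)/(m|\eta|+\varepsilon\varkappa^{-}E^{a^{-}}(\eta)+\lambda)$. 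You instead apply dominated convergence directly to $A_2(A_1(\varepsilon)-\lambda\1)^{-1}G$ in $\L_C$, which forces you to manufacture an $\varepsilon$-independent integrable majorant $\Psi$; the point you correctly flag as the crux --- that the relative bound of Proposition~\ref{pr001} survives replacing the signed kernels by their absolute values --- does hold, since that bound is obtained by the triangle inequality and the Minlos-type identity for the Lebesgue--Poisson measure, and one gets $\|\Psi\|_C\leq(\varkappa^{-}C+\varkappa^{+})\,\|\,|\cdot|\,H\|_C<\infty$ with $H=|(A_1-\lambda\1)^{-1}G|$. The trade-off: the paper's argument is shorter and never re-opens the proof of Proposition~\ref{pr001} (which is only cited from \cite{FKK2009}), since boundedness of $A_2(A_1-\lambda\1)^{-1}$ as a black box suffices; your argument is more hands-on and pointwise, but depends on the internal structure of that cited estimate rather than just its statement. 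One small organizational remark: you should establish $\Psi\in\L_C$ (hence $\Psi(\eta)<\infty$ for $\la$-a.a.\ $\eta$) \emph{before} the pointwise-convergence step, since the finiteness of $\Psi(\eta)$ is what justifies the inner dominated convergence in the $x$-integral of the jump term.
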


\begin{proof}[Proof of Lemma~\ref{L1}]
Proposition~\ref{pr001} and
$$D\left(A_{1}(\varepsilon)\right) =D_{1}\subset D\left( A_{1}\right)=D\left( A_{2}\right) =D_{0}$$ leads to the following formula
\begin{equation*}
A_{2}\left( A_{1}(\varepsilon)-\lambda 1\!\!1 \right) ^{-1}=A_{2}\left(
A_{1}-\lambda 1\!\!1 \right) ^{-1}\left( A_{1}-\lambda 1\!\!1 \right) \left(
A_{1}(\varepsilon)-\lambda 1\!\!1 \right) ^{-1}.
\end{equation*}%
Now, we are left with the task to show that
\begin{equation*}
\left( A_{1}-\lambda 1\!\!1 \right) \left( A_{1}(\varepsilon)-\lambda 1\!\!1 \right)
^{-1}\overset{s}{\longrightarrow }1, \quad \mathrm{as}\quad \varepsilon
\rightarrow 0.
\end{equation*}%
But, for any $G\in \mathcal{L}_{C}$%
\begin{align*}
&\left\Vert \left( \left( A_{1}-\lambda 1\!\!1 \right) \left(
A_{1}(\varepsilon)-\lambda 1\!\!1 \right) ^{-1}-1\!\!1\right) G\right\Vert _{C} \\
=&\int_{\Gamma _{0}}\left\vert \frac{m\left\vert \eta \right\vert
+\lambda }{m\left\vert \eta \right\vert +\varepsilon \varkappa
^{-}E^{a^{-}}\left( \eta \right) +\lambda }-1\right\vert \left\vert
G\left( \eta \right)
\right\vert C^{\left\vert \eta \right\vert }d\lambda \left( \eta \right)  \\
=&\int_{\Gamma _{0}}\frac{\varepsilon \varkappa ^{-}E^{a^{-}}\left(
\eta \right) }{m\left\vert \eta \right\vert +\varepsilon \varkappa
^{-}E^{a^{-}}\left( \eta \right) +\lambda }\left\vert G\left( \eta
\right) \right\vert C^{\left\vert \eta \right\vert }d\lambda \left(
\eta \right) \rightarrow 0,\quad \mathrm{as}\quad \varepsilon
\rightarrow 0
\end{align*}
due to the Lebesgue's dominated convergence theorem.

\begin{lemma}\label{L2}
$\varepsilon B_{2}\left( A_{1}(\varepsilon)-\lambda 1\!\!1 \right) ^{-1}%
\overset{s}{\longrightarrow }0,\; \mathrm{as}\; \varepsilon
\rightarrow 0.$
\end{lemma}

\begin{proof}[Proof of Lemma~\ref{L2}]
Since $\left\Vert  B_{2}G\right\Vert _{C}\leq \frac{1}{4}%
\left\Vert  B_{1}G\right\Vert _{C}$, we have to show that%
\begin{equation*}
\left\Vert \varepsilon B_{1}\left( A_{1}(\varepsilon)-\lambda 1\!\!1 \right)
^{-1}G\right\Vert _{C}\rightarrow 0,\quad \mathrm{as}\quad\varepsilon \rightarrow 0.
\end{equation*}%
But,
\begin{align*}
&\left\Vert \varepsilon B_{1}\left( A_{1}(\varepsilon)-\lambda 1\!\!1
\right) ^{-1}G\right\Vert _{C}\\&=\int_{\Gamma
_{0}}\frac{\varepsilon \varkappa ^{-}E^{a^{-}}\left( \eta \right)
}{m\left\vert \eta \right\vert +\varepsilon \varkappa
^{-}E^{a^{-}}\left( \eta \right) +\lambda }\left\vert G\left( \eta
\right) \right\vert C^{\left\vert \eta \right\vert }d\lambda \left(
\eta \right) \rightarrow 0,\quad\varepsilon\rightarrow 0. \qedhere
\end{align*}
\end{proof}
\noindent The last two lemmas conclude the proof of the main Theorem.
\end{proof}

\begin{remark}
Under assumptions of Proposition \ref{2(cc)} we get the following representation
for the resolvents of $\hat{V}$ and $\hat{L}_{\varepsilon ,\mathrm{ren}}$
\begin{equation*}
\left( \hat{V}-\lambda 1\!\!1 \right)
^{-1}=\left( A_{1}+A_{2}-\lambda 1\!\!1 \right) ^{-1}=\left( A_{1}-\lambda 1\!\!1 \right)
^{-1}\left( A_{2}\left( A_{1}-\lambda 1\!\!1 \right) ^{-1}+1\!\!1\right) ^{-1},
\end{equation*}%
\begin{align}
\left( \hat{L}_{\varepsilon ,\mathrm{ren}}-\lambda 1\!\!1 \right) ^{-1}=&\left( A_{1}(\varepsilon)+A_{2}(\varepsilon)-\lambda 1\!\!1
\right)
^{-1}  \label{longexpansresolv2} \\
=&\left( A_{1}(\varepsilon)-\lambda 1\!\!1 \right) ^{-1}\left(
A_{2}(\varepsilon)\left( A_{1}(\varepsilon)-\lambda 1\!\!1 \right)
^{-1}+1\!\!1\right) ^{-1},\quad\quad \lambda >0.  \nonumber
\end{align}%
\end{remark}

\subsection*{Acknowledgements}
The financial support of DFG through the SFB 701 (Bielefeld
University) and German-Ukrainian Projects 436 UKR 113/97 is
gratefully acknowledged. This work was partially supported by the
Marie Curie ``Transfer of Knowledge'' programme, project TODEQ
MTKD-CT-2005-030042 (Warsaw, IMPAN). O.K. is very thankful to Prof.
J. Zemanek for fruitful and stimulating discussions.


\begin{thebibliography}{99}

\bibitem{BP1997}
B.~Bolker and S.~W. Pacala.
\newblock Using moment equations to understand stochastically driven spatial
  pattern formation in ecological systems.
\newblock {\em Theor. Popul. Biol.}, 52(3):179--197, 1997.

\bibitem{BP1999}
B.~Bolker and S.~W. Pacala.
\newblock Spatial moment equations for plant competitions: Understanding
  spatial strategies and the advantages of short dispersal.
\newblock {\em American Naturalist}, 153:575--602, 1999.

\bibitem{BH1977}
W.~Braun and K.~Hepp.
\newblock The {V}lasov dynamics and its fluctuations in the {$1/N$} limit of
  interacting classical particles.
\newblock {\em Comm. Math. Phys.}, 56(2):101--113, 1977.

\bibitem{DL2000}
U.~Dieckmann and R.~Law.
\newblock Relaxation projections and the method of moments.
\newblock In {\em The Geometry of Ecological Interactions}, pages 412--455.
  Cambridge University Press, Cambridge, UK, 2000.

\bibitem{Dob1979}
R.~L. Dobrushin.
\newblock Vlasov equations.
\newblock {\em Functional Anal. Appl.}, 13(2):115--123, 1979.

\bibitem{DSS1989}
R.~L. Dobrushin, Y.~G. Sinai, and Y.~M. Sukhov.
\newblock Dynamical systems of statistical mechanics.
\newblock In Y.~G. Sinai, editor, {\em Ergodic Theory with Applications to
  Dynamical Systems and Statistical Mechanics}, volume~II of {\em Encyclopaedia
  Math.~Sci.}, Berlin, Heidelberg, 1989. Springer.

\bibitem{EN2000}
K.-J. Engel and R.~Nagel.
\newblock {\em One-parameter semigroups for linear evolution equations}, volume
  194 of {\em Graduate Texts in Mathematics}.
\newblock Springer-Verlag, New York, 2000.

\bibitem{FKK2010b}
D.~Finkelshtein, Y.~Kondratiev, and O.~Kutoviy.
\newblock Vlasov scaling for the {G}lauber dynamics in~continuum.
\newblock {\em CRC-701} Preprint 10055, University of Bielefeld; arXiv:1002.4762v3, 2011.

\bibitem{FKK2009}
D.~Finkelshtein, Y.~Kondratiev, and O.~Kutoviy.
\newblock Individual based model with competition in spatial ecology.
\newblock {\em SIAM J. Math. Anal.}, 41(1):297--317, 2009.

\bibitem{FKO2009}
D.~Finkelshtein, Y.~Kondratiev, and M.~J. Oliveira.
\newblock Markov evolutions and hierarchical equations in the continuum. {I}.
  {O}ne-component systems.
\newblock {\em J. Evol. Equ.}, 9(2):197--233, 2009.

\bibitem{FM2004}
N.~Fournier and S.~Meleard.
\newblock A microscopic probabilistic description of a locally regulated
  population and macroscopic approximations.
\newblock {\em The Annals of Applied Probability}, 14(4):1880--1919, 2004.

\bibitem{Kat1976}
T.~Kato.
\newblock {\em Perturbation theory for linear operators}.
\newblock Springer-Verlag, Berlin, second edition, 1976.
\newblock Grundlehren der Mathematischen Wissenschaften, Band 132.

\bibitem{KK2002}
Y.~Kondratiev and T.~Kuna.
\newblock Harmonic analysis on configuration space. {I}. {G}eneral theory.
\newblock {\em Infin. Dimens. Anal. Quantum Probab. Relat. Top.},
  5(2):201--233, 2002.

\bibitem{KK2006}
Y.~Kondratiev and O.~Kutoviy.
\newblock On the metrical properties of the configuration space.
\newblock {\em Math. Nachr.}, 279(7):774--783, 2006.

\bibitem{KMZ2004}
Y.~Kondratiev, R.~Minlos, and E.~Zhizhina.
\newblock One-particle subspace of the {G}lauber dynamics generator for
  continuous particle systems.
\newblock {\em Rev. Math. Phys.}, 16(9):1073--1114, 2004.

\bibitem{Koz2008}
V.~V. Kozlov.
\newblock The generalized {V}lasov kinetic equation.
\newblock {\em Russian Math. Surveys}, 63(4):691--726, 2008.

\bibitem{Len1975}
A.~Lenard.
\newblock States of classical statistical mechanical systems of infinitely many
  particles. {I}.
\newblock {\em Arch. Rational Mech. Anal.}, 59(3):219--239, 1975.

\bibitem{Len1975a}
A.~Lenard.
\newblock States of classical statistical mechanical systems of infinitely many
  particles. {II}. {C}haracterization of correlation measures.
\newblock {\em Arch. Rational Mech. Anal.}, 59(3):241--256, 1975.

\bibitem{RS1981}
M.~Reed and B.~Simon.
\newblock {\em Methods of Modern Mathematical Physics}, volume Vol. 1:
  Functional Analysis.
\newblock Academic Press, 1981.

\bibitem{Rue1969}
D.~Ruelle.
\newblock {\em Statistical mechanics: {R}igorous results}.
\newblock W. A. Benjamin, Inc., New York-Amsterdam, 1969.

\bibitem{Rue1970}
D.~Ruelle.
\newblock Superstable interactions in classical statistical mechanics.
\newblock {\em Comm. Math. Phys.}, 18:127--159, 1970.

\bibitem{Spo1980}
H.~Spohn.
\newblock Kinetic equations from {H}amiltonian dynamics: {M}arkovian limits.
\newblock {\em Rev. Modern Phys.}, 52(3):569--615, 1980.

\bibitem{Spo1991}
H.~Spohn.
\newblock {\em Large Scale Dynamics of Interacting Particles}.
\newblock Texts and Monographs in Physics. Springer, November 1991.

\end{thebibliography}
\end{document}